\theoremstyle{plain}
\newtheorem{theorem}{Theorem}[section]
\newtheorem{proposition}[theorem]{Proposition}
\newtheorem{lemma}[theorem]{Lemma}
\newtheorem{corollary}[theorem]{Corollary}
\theoremstyle{definition}
\newtheorem{definition}[theorem]{Definition}
\theoremstyle{remark}
\newtheorem{remark}[theorem]{Remark}
\newcommand\rmDelta{\mathrm{\Delta}}
\newcommand\FF{\mathbb{F}}
\renewcommand\int{\mathbf{int}}
\numberwithin{equation}{section}
\newcommand{\Lring}{\mathcal{L}_{\rm ring}}
\newcommand{\Lval}{\mathcal{L}_{\rm val}}
\title{Axiomatizing the existential theory of $\mathbb{F}_{q}(\!(t)\!)$}
\author{Sylvy Anscombe, Philip Dittmann and Arno Fehm}
\thanks{\today}
\thanks{MSC classes: 	03C60, 12L05, 11D88}
\thanks{Keywords: local fields, positive characteristic, decision procedure, existential theory}
\address{Universit\'{e} Paris Cit\'{e} and Sorbonne Universit\'{e}, CNRS, IMJ-PRG, F-75013 Paris, France}
\email{sylvy.anscombe@imj-prg.fr}
\address{Institut f\"{u}r Algebra, Technische Universit\"{a}t Dresden, 01062 Dresden, Germany}
\email{philip.dittmann@tu-dresden.de}
\address{Institut f\"{u}r Algebra, Technische Universit\"{a}t Dresden, 01062 Dresden, Germany}
\email{arno.fehm@tu-dresden.de}
\begin{document}
 \begin{abstract}
 We study the existential theory of equicharacteristic henselian valued fields with a distinguished uniformizer.
 In particular, assuming a weak consequence of resolution of singularities, we obtain an
 axiomatization of -- and therefore an algorithm to decide --
 the existential theory relative to the existential theory of the residue field.
 This is both more general and works under weaker resolution hypotheses than the algorithm of Denef and Schoutens,
 which we also discuss in detail.
 In fact, the consequence of resolution of singularities our results are conditional on is the weakest under which they hold true.
 \end{abstract}

\maketitle

\section{Introduction}

\noindent
Hilbert's tenth problem asks for an algorithm to decide whether a given polynomial over $\mathbb{Z}$ has a solution in $\mathbb{Z}$, which was shown to be impossible by work of Davis, Putnam, Robinson and Matiyasevich.
Analogues for various other rings and fields have been studied since, see \cite{Poonen_survey,Shlapentokh,Koenigsmann_survey}.
For local fields of positive characteristic, i.e.~Laurent series fields $\mathbb{F}_q(\!(t)\!)$ over a finite field with $q=p^n$ elements, there are two results in the literature: 
The first one works with polynomials over $\mathbb{F}_p(t)$, which is arguably the correct analogue of $\mathbb{Z}$ in this setting, but needs to assume the truth of a deep and unresolved conjecture from algebraic geometry, resolution of singularities (see Section \ref{sec:resolution} for precise statements and discussion):

\begin{theorem}[Denef--Schoutens {\cite[Theorem 4.3]{DS}}]\label{thm:DS1}
Assume that resolution of singularities holds
in characteristic $p$.
There exists an algorithm that decides whether a given system of polynomial equations over $\mathbb{F}_p(t)$
has a solution in $\mathbb{F}_q(\!(t)\!)$.
\end{theorem}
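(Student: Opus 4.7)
The plan is to reduce the decision problem for $V(\mathbb{F}_q(\!(t)\!))$ to enumerating $\mathbb{F}_q$-rational points on the special fiber of a smooth proper model, by combining resolution of singularities with Hensel's lemma and the valuative criterion of properness.

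First I would interpret the input as defining an affine variety $V$ over $\mathbb{F}_p(t)$, pass to a projective closure $\bar V \subseteq \mathbb{P}^n_{\mathbb{F}_p(t)}$, and apply the hypothesized effective resolution of singularities to produce a proper birational morphism $\pi \colon \tilde V \to \bar V$ with $\tilde V$ smooth and projective. Spreading out to $\mathbb{F}_p[t]$ and base-changing to the valuation ring $\mathcal{O} = \mathbb{F}_q[\![t]\!]$, I obtain a smooth projective $\mathcal{O}$-scheme $\tilde{\mathcal{V}}$.

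Next, by properness and the valuative criterion, $\tilde{\mathcal{V}}(\mathcal{O}) = \tilde V(\mathbb{F}_q(\!(t)\!))$; and by smoothness over $\mathcal{O}$, Hensel's lemma identifies this set with the $\mathbb{F}_q$-points of the special fiber $\tilde{\mathcal{V}}_0$, a finite-type $\mathbb{F}_q$-scheme whose $\mathbb{F}_q$-points can be listed effectively. For each such point I would lift to $\tilde{\mathcal{V}}(\mathcal{O})$ by Newton iteration to sufficient $t$-adic precision, push it forward along $\pi$ to a point of $\bar V(\mathbb{F}_q(\!(t)\!))$, and test the Zariski-open condition that it lies in the affine part $V$. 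For the converse --- that no solution of the original system is missed --- I would exploit that $\pi$ is proper and birational: a point of $V(\mathbb{F}_q(\!(t)\!))$ extends by properness of $\bar V$ to an $\mathcal{O}$-point of $\bar V$, and a lift to an $\mathcal{O}$-point of $\tilde{\mathcal{V}}$ is obtained by first lifting the generic point via birationality of $\pi$ and then applying properness of $\pi$.

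The main obstacle is precisely the first step: obtaining the smooth model $\tilde{\mathcal{V}}$ \emph{effectively}, which is why the theorem is conditional on resolution of singularities in characteristic $p$. A secondary technical concern is the lifting of $\mathcal{O}$-points of $\bar V$ through the proper birational morphism $\pi$; this is not automatic in general, but one can either arrange the resolution so that $\pi$ is an isomorphism outside a subscheme of codimension at least two, or induct on dimension applied to the exceptional fibers. Everything else reduces to elementary effective arithmetic over the finite field $\mathbb{F}_q$.
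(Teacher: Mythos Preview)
Your proposal has a genuine gap at the ``spreading out'' step. Resolving $\bar V$ over the field $\mathbb{F}_p(t)$ yields a smooth projective $\mathbb{F}_p(t)$-variety $\tilde V$, but spreading this out to $\mathbb{F}_p[t]$ only guarantees smoothness over some open neighbourhood of the generic point of $\operatorname{Spec}\mathbb{F}_p[t]$, not over the closed point $t=0$. After base change to $\mathcal{O}=\mathbb{F}_q[\![t]\!]$ the resulting scheme $\tilde{\mathcal V}$ is projective over $\mathcal{O}$, but its special fibre $\tilde{\mathcal V}_0$ may well be singular (or non-reduced), and then the Hensel lifting $\tilde{\mathcal V}_0(\mathbb{F}_q)\to\tilde{\mathcal V}(\mathcal{O})$ fails. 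Applying the resolution hypothesis again does not repair this: even if you regard the integral model as a variety over $\mathbb{F}_p$ and resolve it, you obtain a \emph{regular} total space, which is strictly weaker than smoothness over $\mathcal{O}$ (for instance $xy=t$ is regular but has singular fibre at $t=0$). Obtaining an honest smooth proper $\mathcal{O}$-model is a semistable-reduction-type statement and is not a consequence of the resolution hypothesis \eqref{R:Blow} used here.

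The paper sidesteps the need for any smooth integral model. The role you assign to Hensel lifting from the special fibre is played instead by an effective form of Greenberg's approximation theorem (Lemma~\ref{lem:Greenberg}), which lifts solutions from $R/(\pi^N)$ to $R$ for an \emph{arbitrary} affine $R$-scheme and hence reduces the solvability of any system of equations in $R$ to a question over~$\kappa$ (Lemma~\ref{lem:reduce_to_residue_field}). Resolution is then used only over the field $\mathbb{F}_p(\pi)$, to pass from an arbitrary variety to a regular one, so that Lemma~\ref{lem:henselian_dense} can handle the inequations; this is where the inductive descent over the exceptional locus that you sketch actually takes place. A smaller issue in your outline is the step ``lift a point of $\tilde{\mathcal V}_0(\mathbb{F}_q)$ and test whether it lies in $V$'': different Hensel lifts of the same closed point may or may not land in the affine piece, so a single lift to finite precision does not decide the question; one again needs an argument like Lemma~\ref{lem:henselian_dense}.
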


The second one, which is proved by completely different methods, needs no such hypothesis but allows for polynomials only over $\mathbb{F}_p$:

\begin{theorem}[Anscombe--Fehm {\cite[Corollary 7.7]{AF}}]\label{thm:AF1}
There exists an algorithm that decides whether a given system of polynomial equations over $\mathbb{F}_p$
has a solution in $\mathbb{F}_q(\!(t)\!)$.
\end{theorem}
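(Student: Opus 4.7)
The plan is to exploit the fact that the polynomial coefficients lie in the prime field $\mathbb{F}_p \subseteq \mathbb{F}_q$, so solvability in $\mathbb{F}_q(\!(t)\!)$ is a pure existential $\Lring$-sentence with no parameter involving $t$. This restriction is crucial: working in the pure ring language gives us the flexibility to replace $t$ by any other element of positive value, which is what should let us bypass the Artin--Schreier phenomena that derail classical Ax--Kochen--Ershov in equicharacteristic $p$.

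First, I would aim to establish a transfer principle: the existential $\Lring$-theory of any equicharacteristic henselian nontrivially valued field $(K,v)$, restricted to sentences with coefficients in the prime field $\mathbb{F}_p$, should depend only on the existential $\Lring$-theory of the residue field $Kv$. The intuition is that, since we do not name a uniformizer, we can freely transport solutions along the valuation direction via a henselian lifting argument: a residue-field witness, combined with an essentially arbitrary element of positive value, should assemble after successive Hensel improvements into an honest solution in $K$.

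Second, applying this transfer with $Kv=\mathbb{F}_q$, the question of whether $\bar f \in \mathbb{F}_p[\bar x]$ has a zero in $\mathbb{F}_q(\!(t)\!)$ becomes equivalent to having a zero in any convenient henselian valued field with residue field $\mathbb{F}_q$ --- for instance, in the henselization $\mathbb{F}_q(s)^{h}$ of $\mathbb{F}_q(s)$ at the $s$-adic place, or equivalently in a quotient $\mathbb{F}_q[s]/(s^N)$ for some $N$ together with a Hensel smoothness witness. Both are effectively checkable: one enumerates tuples of polynomials in $s$ of bounded degree over the finite field $\mathbb{F}_q$, evaluates $\bar f$, and tests the standard Hensel criterion. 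The positive side yields a solution; the negative side follows from the transfer, since unsolvability in any one such model forces unsolvability in $\mathbb{F}_q(\!(t)\!)$.

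The principal obstacle is the transfer principle itself. Classical Ax--Kochen--Ershov fails in equal characteristic $p$ because of Artin--Schreier defect extensions, so any honest proof must confront these directly. The hope is that working with pure ring sentences over $\mathbb{F}_p$, and in particular \emph{without} naming a uniformizer, restricts the problem just enough to avoid defects: a would-be defect extension depends on a specific choice of uniformizer and so cannot be detected by an $\Lring$-sentence over the prime field. Turning this intuition into an approximation theorem robust enough to drive the decision algorithm --- ideally, one saying that any solution in $\mathbb{F}_q(\!(t)\!)$ can be realized inside $\mathbb{F}_q(s)^h$ after renaming parameters --- is the technical heart of the argument.
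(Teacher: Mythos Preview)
The paper does not itself prove this theorem; it is quoted from \cite{AF}, and the paper's own contribution (Section~\ref{sec:axiomatization}) is a \emph{conditional} extension that allows a named uniformizer at the price of assuming \eqref{R:ExClosed}. That said, your high-level plan---establish a transfer principle showing that the existential $\Lring$-theory of an equicharacteristic henselian nontrivially valued field is determined by the existential theory of its residue field, then invoke the decidability of $\mathbb{F}_q$---is exactly the strategy of \cite{AF} as summarized in the paragraph following the theorem, and is reproduced (conditionally, with parameters) as Corollary~\ref{cor:notdiscrete} here.

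Your proposed algorithm, however, contains a genuine gap. The claimed equivalence between solvability in $\mathbb{F}_q(s)^h$ and ``solvability in some $\mathbb{F}_q[s]/(s^N)$ together with a Hensel smoothness witness'' is false: a variety all of whose $\mathbb{F}_q(s)^h$-points are singular (for instance $x^2+y^2=0$ over $\mathbb{F}_3$, whose only point is the origin) admits no such witness, so your enumeration never certifies solvability; and you give no terminating procedure for the negative side at all. Once the transfer principle is established, the correct algorithm is simply proof enumeration from the resulting recursive axiomatization (equicharacteristic henselian nontrivially valued, with the decidable $\mathrm{Th}_{\exists/\forall}(\mathbb{F}_q)$ imposed on the residue field), exactly as in the last paragraph of the proof of Theorem~\ref{thm:main}; if you want something more concrete, effective Greenberg approximation (Lemma~\ref{lem:Greenberg}) produces a single computable $N$ with no smoothness hypothesis. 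Finally, your intuition that dispensing with a named uniformizer is what circumvents defect is on target, but the actual mechanism in \cite{AF} is not a head-on analysis of Artin--Schreier extensions: it is a partial-section argument---finitely generated separable subextensions of the residue field lift into the valued field---combined with compactness, in the spirit of Lemma~\ref{lem:section} and Corollary~\ref{cor:large} of this paper.
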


Both results have recently found various applications, see for example 
\cite{Onay,MRUV, Kostas1,Kostas2,Kostas3}.
In fact, both results are more general than stated here, and these more general results are most naturally phrased in the language of the model theory of valued fields:

\begin{definition}
We denote by $\mathcal{L}_{\rm ring}=\{+,-,\cdot,0,1\}$ the language of rings,
by $\mathcal{L}_{\rm val}=\mathcal{L}_{\rm ring}\cup\{\mathcal{O}\}$ the language of valued fields,
where $\mathcal{O}$ is a unary predicate symbol,
and by $\mathcal{L}_{\rm ring}(\varpi)$ and
$\mathcal{L}_{\rm val}(\varpi)$ the respective language expanded by a constant symbol $\varpi$.
We view a valued field $(K,v)$ as an $\mathcal{L}_{\rm val}$-structure by interpreting $\mathcal{O}$ as the valuation ring $\mathcal{O}_v$ of $v$.
A {\em uniformizer} of a valued field $(K,v)$ is an element $\pi\in K$ of smallest positive value. 
Given a distinguished uniformizer $\pi$ of $(K,v)$, we view
$(K,v,\pi)$ as an $\mathcal{L}_{\rm val}(\varpi)$-structure by interpreting $\varpi$ as $\pi$.
\end{definition}

So for example $\mathbb{F}_q(\!(t)\!)$ carries the natural equicharacteristic henselian valuation $v_t$ (the $t$-adic valuation), and if we view
$(\mathbb{F}_q(\!(t)\!),v_t,t)$ as an $\mathcal{L}_{\rm val}(\varpi)$-structure, the fact that a given system of polynomial equations over $\mathbb{F}_p(t)$ has a solution in $\mathbb{F}_q(\!(t)\!)$ can be expressed as the truth in $\mathbb{F}_q(\!(t)\!)$ of a particular existential $\mathcal{L}_{\rm ring}(\varpi)$-sentence
(by existential we mean of the form $\exists x_1\dots\exists x_n\psi(\underline{x})$
with a quantifier-free formula $\psi(\underline{x})$).
The general result of Denef and Schoutens,
which we revisit and discuss in Section \ref{sec:DS},
can now be phrased as follows:

\begin{theorem}[Denef--Schoutens {\cite[Theorem 4.3]{DS}}]\label{thm:DS2}
Let $(K,v)$ be an equicharacteristic henselian valued field with distinguished uniformizer $\pi$.
Assume that
\begin{enumerate}
    \item\label{cond:DS1} resolution of singularities holds
    in characteristic $p$,
    \item\label{cond:DS2}  $\mathcal{O}_v$
is a discrete valuation ring, and
   \item\label{cond:DS3} $\mathcal{O}_v$ is excellent.
\end{enumerate}
Then the existential $\mathcal{L}_{\rm val}(\varpi)$-theory ${\rm Th}_\exists(K,v,\pi)$ of $(K,v,\pi)$ is decidable
relative to the existential $\mathcal{L}_{\rm ring}$-theory ${\rm Th}_\exists(Kv)$ of the residue field $Kv$
(in the sense of Turing reduction, cf.~Remark \ref{rem:relatively-decidable_0}).
\end{theorem}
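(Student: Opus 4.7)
The plan is to reduce truth of an existential $\mathcal{L}_{\rm val}(\varpi)$-sentence in $(K,v,\pi)$ to the existence of a $Kv$-rational point on a scheme that is effectively constructed from the sentence, so that the residue-field oracle can then conclude. The reduction proceeds in three linked steps.

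\textbf{Step 1: scheme-theoretic encoding.} Given an existential sentence $\varphi = \exists\underline{x}\,\psi(\underline{x})$, put $\psi$ in disjunctive normal form and handle each conjunctive clause separately. Because $\varpi$ is a uniformizer, a literal $\neg\mathcal{O}(t)$ can be rewritten as ``there exist $u \in \mathcal{O}_v^\times$ and $n \geq 1$ with $t\varpi^n u = 1$'', where the exponent $n$ is bounded in terms of the polynomial equations imposed by the clause (via a Newton-polygon type argument). After introducing auxiliary variables to record the various quotients, deciding whether $(K,v,\pi) \models \varphi$ reduces to deciding, for each of finitely many effectively produced affine schemes $X$ of finite type over $\mathbb{Z}[\varpi] \subseteq \mathcal{O}_v$, whether $X(\mathcal{O}_v) \neq \emptyset$.

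\textbf{Step 2: passage to the completion.} Since $\mathcal{O}_v$ is henselian and excellent (conditions (2) and (3)), Artin approximation (in the general form of Popescu/Spivakovsky) gives $X(\mathcal{O}_v)\neq\emptyset$ if and only if $X(\widehat{\mathcal{O}}_v)\neq\emptyset$, where $\widehat{\mathcal{O}}_v$ is the $\varpi$-adic completion of $\mathcal{O}_v$, a complete DVR with residue field $Kv$.

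\textbf{Step 3: resolution and Hensel lift.} Invoking resolution of singularities in characteristic $p$ (condition (1)), one constructs a regular scheme $X'$ with a proper birational morphism $X' \to X$. The valuative criterion of properness then gives $X(\widehat{\mathcal{O}}_v) \neq \emptyset \iff X'(\widehat{\mathcal{O}}_v) \neq \emptyset$. On the open subscheme of $X'$ that is smooth over $\widehat{\mathcal{O}}_v$, Hensel's lemma in the complete case lifts $Kv$-points of the special fibre to $\widehat{\mathcal{O}}_v$-points of $X'$. The resulting question ``does the appropriate special fibre carry a $Kv$-point?'' is an existential $\mathcal{L}_{\rm ring}$-sentence over $Kv$ with effectively constructible equations, decidable by the oracle for ${\rm Th}_\exists(Kv)$.

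The principal obstacle is the regularity-versus-smoothness gap at Step 3: a regular $\widehat{\mathcal{O}}_v$-scheme need not be smooth over $\widehat{\mathcal{O}}_v$, so Hensel's lemma does not apply to every component of $X'$. Denef and Schoutens handle this through a careful application of embedded resolution so that the special fibre becomes a strict normal crossings divisor and the relative smooth locus of $X'$ is large enough to capture enough $\widehat{\mathcal{O}}_v$-points to decide $X(\widehat{\mathcal{O}}_v) \neq \emptyset$. A secondary issue is the effectivity of resolution: although condition (1) is framed existentially, the algorithm needs the resolution to be computable from the defining equations of $X$, a point addressed via the effective nature of Hironaka-style centers.
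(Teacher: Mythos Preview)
Your Step~1 is where the argument breaks. You claim that after rewriting $\neg\mathcal{O}(t)$ with a bounded exponent $n$, the whole problem becomes ``is $X(\mathcal{O}_v)\neq\emptyset$?'' for finitely many \emph{closed} affine $\mathbb{Z}[\varpi]$-schemes $X$, i.e.\ a purely positive existential statement over $\mathcal{O}_v$. But the inequations do not disappear this way. After interpreting $(K,v,\pi)$ in $(R,\pi)=(\mathcal{O}_v,\pi)$ one is left with deciding whether a system $\bigwedge_i f_i(\underline X)=0\wedge g(\underline X)\neq 0$ has a solution in $R$, and there is no a~priori bound on $v(g(\underline x))$ coming from the equations (take e.g.\ $f_1=X_1$, $g=X_2$). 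A Newton-polygon argument cannot supply one. Handling this single inequation is precisely the hard part of the Denef--Schoutens algorithm, and it is not done by encoding.

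The paper's route is structurally different from what you sketch. First, positive existential sentences are reduced to the residue field via an \emph{effective} Greenberg/Artin approximation: solvability in $R$ is equivalent to solvability in $R/(\pi^N)$ for a computable $N$, and the latter is translated by Weil restriction into an existential sentence over $\kappa=Kv$. Second, resolution is applied not to a scheme over $\mathcal{O}_v$ but to the affine $\mathbb{F}_p(\pi)$-variety $V$ cut out by the $f_i$; the regular--versus--smooth gap is closed by the separability of $K/\mathbb{F}_p(\pi)$ (regular points with residue field in $K$ are automatically smooth). Third, the inequation $g\neq 0$ is removed on a \emph{regular} $V$ by a henselian density/implicit-function argument (perturb a smooth $R$-point to avoid $g=0$), not by lifting special-fibre points. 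For singular $V$ one takes a blowing-up resolution $V'\to V$, checks the regular pieces $V'_i$ by the previous step, and if no point is found, recurses on the lower-dimensional closed complement $V\setminus U$. The resolution itself is located by blind enumeration of candidate data and verification, so no effective Hironaka-type procedure is needed --- mere existence suffices.

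Two further points. Your Step~3 equivalence $X(\widehat{\mathcal{O}}_v)\neq\emptyset\Leftrightarrow X'(\widehat{\mathcal{O}}_v)\neq\emptyset$ via the valuative criterion is not correct as stated: $X$ is affine, hence not proper over the base, and for $X'\to X$ proper birational one only gets a lift of an $\widehat{\mathcal{O}}_v$-point of $X$ to $X'$ when its generic-fibre image lies in the birational locus; the remaining points are exactly what forces the dimension recursion in the paper. And your Step~2 (passage to the completion via general Artin approximation) is not how the paper uses excellence; in the presentation here the excellence hypothesis is in fact dispensed with, since parameters come only from $\mathbb{F}_p[\pi]$ and the effective Greenberg bound already lives over $\mathbb{F}_p[t]$.
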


The more general form of Theorem \ref{thm:AF1}
({\cite[Theorem 6.5]{AF}})
gives an axiomatization of the existential theory of an equicharacteristic henselian valued field relative to the existential theory of the residue field, 
which in particular also implies a relative decidability result but is interesting in its own right, 
and useful for example when questions of uniformity are concerned.

The goal of this work is to show that the approach from 
\cite{AF} can be extended to the setting of Theorem \ref{thm:DS2} as well,
thereby combining the best of both worlds.
Besides also obtaining an axiomatization of the existential theory,
the assumption (\ref{cond:DS1}) can thereby be weakened to a local version of it, called local uniformization
(see Section \ref{sec:resolution} for definitions and an extensive discussion), and assumptions (\ref{cond:DS2}) and (\ref{cond:DS3}) can be completely eliminated.
In Section \ref{sec:axiomatization} on p.~\pageref{proof_of_15} we prove:

\begin{theorem}\label{thm:intro_main}
Assume that
consequence \eqref{R:ExClosed} 
of local uniformization holds (p.~\pageref{R:ExClosed}).
Let $(K,v)$ be an equicharacteristic henselian valued field with distinguished uniformizer $\pi$.
Then the universal/existential $\mathcal{L}_{\rm val}(\varpi)$-theory of $(K,v,\pi)$ is entailed by
\begin{enumerate}[(i)]
    \item  $\mathcal{L}_{\rm val}$-axioms for equicharacteristic henselian valued fields,
    \item the $\mathcal{L}_{\rm val}(\varpi)$-axiom expressing that $\pi$ has smallest positive value, and
    \item $\mathcal{L}_{\rm val}$-axioms expressing that the residue field models the universal/existential $\mathcal{L}_{\rm ring}$-theory of $Kv$.
\end{enumerate} 
In particular, the existential $\mathcal{L}_{\rm val}(\varpi)$-theory ${\rm Th}_\exists(K,v,\pi)$ of $(K,v,\pi)$
is decidable relative to the existential $\mathcal{L}_{\rm ring}$-theory ${\rm Th}_\exists(Kv)$ of $Kv$.
\end{theorem}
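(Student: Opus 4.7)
The plan is to adapt the embedding-lemma strategy of \cite{AF} to the setting with a distinguished uniformizer. Let $T$ denote the axiom scheme (i)--(iii). I would prove the following \emph{embedding lemma}: for every sufficiently saturated model $(K^*,v^*,\pi^*)\models T$ there is an $\mathcal{L}_{\rm val}(\varpi)$-embedding $(K,v,\pi)\hookrightarrow (K^*,v^*,\pi^*)$. From this, standard preservation arguments will yield the universal/existential entailment -- existential sentences are preserved going up the embedding and universal sentences going down into the saturated target -- and the relative decidability then follows because the axioms (i)--(iii) are recursively enumerable modulo an oracle for ${\rm Th}_\exists(Kv)$, so Turing reduction of $(K,v,\pi)$ to $Kv$ falls out via the completeness theorem.

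\textbf{Construction of the embedding.}
I would construct the embedding by transfinite recursion, at each step extending a partial $\mathcal{L}_{\rm val}(\varpi)$-embedding $\iota:K_0\to K^*$ by one element $a\in K\setminus K_0$. Four cases arise according to the type of $a$ over $K_0$:
\emph{(a)} $a$ is algebraic over $K_0$ -- lift by invoking henselianity of $(K^*,v^*)$ on the minimal polynomial;
\emph{(b)} $a$ is a $v$-unit with $\overline a$ transcendental over $\overline{K_0}$ -- use axiom (iii) together with saturation to produce $a^*\in\mathcal{O}_{v^*}$ with the correct residue type;
\emph{(c)} $v(a)\notin v(K_0^\times)$ -- reduce to case (b) by adjusting $a$ by an integer power of $\varpi$ supplied by axiom (ii), after having first absorbed a witness of the new value into $K_0$;
\emph{(d)} $a$ is transcendental over $K_0$ but $\overline a$ is algebraic over $\overline{K_0}$ -- the geometrically substantive case, requiring \eqref{R:ExClosed}.

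\textbf{The local-uniformization step and main obstacle.}
Case (d) is the technical heart, and is where the consequence \eqref{R:ExClosed} of local uniformization is invoked. Geometrically, the data $(K_0,a)$ encode an affine scheme $X$ over a subring of $\mathcal{O}_v$ together with a closed point of its special fibre arising from $\overline a$, and one seeks to lift this closed point to a point valued in $\mathcal{O}_{v^*}$. Where $X$ is smooth the classical Hensel lemma suffices; in general, \eqref{R:ExClosed} provides a proper birational modification $\widetilde X\to X$ whose special fibre carries a smooth point lifting $\overline a$, to which Hensel's lemma then applies, with the residue-field witness provided by axiom (iii) and the final lift produced by $\aleph_1$-saturation of $(K^*,v^*,\pi^*)$. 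The hard part, and the genuine novelty compared to \cite{AF}, is to manage the distinguished constant $\varpi$ throughout: the chosen modification and the subsequent lift must both be arranged so that $\pi^*$ retains its role as a uniformizer of $v^*$, i.e.~so that the embedding is literally an $\mathcal{L}_{\rm val}(\varpi)$-embedding and not merely an $\mathcal{L}_{\rm val}$-embedding. Arranging the geometry so that the uniformizer survives the desingularization step is the core difficulty, and it is exactly at this point that the weakest-possible resolution-type hypothesis \eqref{R:ExClosed} is used to its full strength.
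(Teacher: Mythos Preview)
Your proposal contains a genuine gap: you misidentify what \eqref{R:ExClosed} provides. It does \emph{not} furnish a proper birational modification $\widetilde X\to X$---that would be \eqref{R:Blow} or \eqref{R:Mod}. The hypothesis \eqref{R:ExClosed} is the purely model-theoretic statement that every large field $E$ is existentially closed in any extension $F/E$ carrying an $E$-trivial valuation with residue field $E$. Your case~(d), invoking a desingularization of an affine scheme and then lifting smooth points via Hensel's lemma, is essentially the Denef--Schoutens method of Section~\ref{sec:DS}, which needs the stronger hypothesis \eqref{R:Blow} and which the present paper explicitly sets out to bypass. As written, your argument does not prove the theorem under \eqref{R:ExClosed} alone.

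The paper's argument is also structurally different from an element-by-element back-and-forth. One first replaces the existential $\mathcal{L}_{\rm val}(\varpi)$-sentence by an $\mathcal{L}_{\rm ring}(C)$-sentence with $C=\mathbb{F}_p(\pi)$ (Lemma~\ref{lem:reduce_to_Lring}). After saturating, one passes to the finest proper coarsening $v^+$ of $v$; the induced valuation $\bar v$ on $Kv^+$ is then complete and $\mathbb{Z}$-valued with uniformizer $\pi$, and since $Kv^+$ is large (being nontrivially henselian), \eqref{R:ExClosed} applied via a section of the residue map gives ${\rm Th}_\exists(K,C)={\rm Th}_\exists(Kv^+,C)$ (Corollary~\ref{cor:large}). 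This coarsening step is precisely what disposes of the part of the value group outside $\mathbb{Z}\cdot v(\pi)$, which your case~(c) cannot handle: adjusting by integer powers of $\varpi$ only reaches values in the convex subgroup generated by $v(\pi)$, but a model of $T$ need not be $\mathbb{Z}$-valued. Finally, the complete $\mathbb{Z}$-valued situation is dispatched in one stroke by the Cohen structure theorem (Propositions~\ref{prop:complete_structure} and~\ref{prop:complete_lift}): one identifies $Kv^+\cong Kv(\!(t)\!)$ and $Lw^+\cong Lw(\!(t)\!)$ over $C$, and a $Cu$-embedding of residue fields (obtained from axiom~(iii) and saturation) lifts to a $C$-embedding of power series fields sending $t$ to $\pi$. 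No case analysis on the type of a single element, and no geometric input beyond \eqref{R:ExClosed}, is required.
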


Here, by universal/existential we mean  sentences that are either universal or existential.
In fact, in Theorem~\ref{thm:main} we prove this with parameters
from a base field more general than $\mathbb{F}_p(\pi)$.
In this strong form, this axiomatization statement for universal/existential theories is in fact equivalent to \eqref{R:ExClosed}, see Remark \ref{rem:R4}, so our hypothesis cannot be weakened further.

Our results in particular give a new proof of Theorem \ref{thm:DS1}
which works under this weaker hypothesis:

\begin{corollary}\label{cor:intro_Fpt}
Assume that
consequence \eqref{R:ExClosed}
of local uniformization holds.
There exists an algorithm that decides whether a given system of polynomial equations over $\mathbb{F}_q(t)$
has a solution in $\mathbb{F}_q(\!(t)\!)$.
\end{corollary}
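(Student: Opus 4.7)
The corollary follows directly from Theorem \ref{thm:intro_main} applied to $(\mathbb{F}_q(\!(t)\!), v_t, t)$ (with $v_t$ the $t$-adic valuation), together with the decidability of the existential theory of the finite residue field. Indeed, $(\mathbb{F}_q(\!(t)\!), v_t, t)$ is an equicharacteristic henselian valued field with distinguished uniformizer $t$, so Theorem \ref{thm:intro_main} applies: its existential $\mathcal{L}_{\rm val}(\varpi)$-theory is decidable relative to ${\rm Th}_\exists(\mathbb{F}_q)$. Since $\mathbb{F}_q$ is finite, ${\rm Th}_\exists(\mathbb{F}_q)$ is trivially decidable by brute-force enumeration of variable assignments, and so ${\rm Th}_\exists(\mathbb{F}_q(\!(t)\!), v_t, t)$ is decidable outright.

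It then remains to encode the input ``system of polynomial equations over $\mathbb{F}_q(t)$'' as an existential $\mathcal{L}_{\rm val}(\varpi)$-sentence. After clearing denominators, I may assume the coefficients lie in $\mathbb{F}_q[t]$. To absorb the $\mathbb{F}_q$-coefficients, I fix a primitive element $\alpha$ of $\mathbb{F}_q/\mathbb{F}_p$ with minimal polynomial $f(Y) \in \mathbb{F}_p[Y]$ and rewrite each coefficient as an $\mathbb{F}_p$-polynomial in $\alpha$, turning each equation $P_i(\underline{x}) = 0$ into $\tilde P_i(\alpha, t, \underline{x}) = 0$ with $\tilde P_i \in \mathbb{F}_p[Y, t, \underline{x}]$. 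The original system is then solvable in $\mathbb{F}_q(\!(t)\!)$ if and only if
\[
    \exists y\, \exists \underline{x}\, \Bigl(f(y) = 0 \wedge \bigwedge_i \tilde P_i(y, \varpi, \underline{x}) = 0\Bigr)
\]
holds in $(\mathbb{F}_q(\!(t)\!), v_t, t)$: every root $y$ of $f$ in $\mathbb{F}_q(\!(t)\!)$ equals $\sigma(\alpha)$ for some $\sigma \in {\rm Gal}(\mathbb{F}_q(\!(t)\!)/\mathbb{F}_p(\!(t)\!))$, so applying $\sigma^{-1}$ componentwise transports a solution of the $y$-version to a solution of the $\alpha$-version (and the converse is immediate by taking $y := \alpha$). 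The resulting sentence is existential in $\mathcal{L}_{\rm ring}(\varpi) \subseteq \mathcal{L}_{\rm val}(\varpi)$ and so decided by the first step.

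The main ``obstacle'' is essentially cosmetic: matching the coefficient ring $\mathbb{F}_q(t)$ to the constants $\{0,1,\varpi\}$ available in $\mathcal{L}_{\rm ring}(\varpi)$, which the Galois trick above handles. Alternatively, one could invoke the parametrized Theorem \ref{thm:main} promised in the introduction, which admits a more general base field and so accommodates $\mathbb{F}_q(\pi)$-coefficients directly, sidestepping the encoding entirely.
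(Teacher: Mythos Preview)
Your argument is correct. The Galois trick is sound: the automorphisms of $\mathbb{F}_q(\!(t)\!)/\mathbb{F}_p(\!(t)\!)$ permute the roots of $f$ transitively and fix $t$ and all $\mathbb{F}_p$-coefficients, so the existential sentence you write down is indeed equivalent to solvability of the original system.

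The paper proceeds differently. Rather than applying Theorem~\ref{thm:intro_main} and then encoding $\mathbb{F}_q$-coefficients by hand, it invokes the parametrized Theorem~\ref{thm:main} directly with base $(C,u)=(\mathbb{F}_q(t),v_t)$, so that constants for all of $\mathbb{F}_q(t)$ are available from the outset. The price is that Theorem~\ref{thm:main} has an extra oracle, the quantifier-free diagram of $(C,u,\pi)$, and the paper must check that this diagram is decidable for $(\mathbb{F}_q(t),v_t,t)$; it does so by appealing to the explicit representability of $\mathbb{F}_q(t)$ in the sense of Fr\"ohlich--Shepherdson and noting that the $t$-adic valuation is computable on quotients of polynomials. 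Your route trades that verification for the Galois encoding, using only the weaker Theorem~\ref{thm:intro_main} (which has no diagram oracle since the base is the prime field). Both are short; you even flag the paper's route as your alternative, so the two arguments are essentially dual choices of where to absorb the $\mathbb{F}_q$-constants.
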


Recent work of Kartas includes related results,
see Remark~\ref{rem:Kostas}.

\section{Resolution of singularities and local uniformization}
\label{sec:resolution}

\noindent
In this section we discuss several versions of resolution of singularities and some of its consequences.

\begin{definition}
  Let $K$ be a field.
  A \emph{$K$-variety} is an integral separated $K$-scheme of finite type,
  and a morphism of $K$-varieties $Y \to X$ is simply a morphism of $K$-schemes.

  Given a $K$-variety $X$, a \emph{resolution of singularities} (or for short \emph{resolution}) of $X$ is a proper birational morphism $Y \to X$ with $Y$ a regular $K$-variety.
  A \emph{blowing-up resolution} is a morphism $Y \to X$ arising as the blowing-up along some closed proper subscheme $Z \subset X$ such that $Y$ is regular.

  By a valuation $v$ on a field extension $F/K$ we mean a valuation
  on $F$ which is trivial on $K$.
  Given a finitely generated field extension $F/K$ and a valuation $v$ on $F/K$, a \emph{local uniformization} of $v$ consists of a $K$-variety $Y$ and an isomorphism $F \cong_K K(Y)$ such that, under the identification of $F$ with $K(Y)$, $v$ is centred on a regular point $y$ of $Y$ (i.e.\ $\mathcal{O}_v$ dominates the regular local ring $\mathcal{O}_{Y,y}$).
\end{definition}

\begin{lemma}\label{lem:resolutionConsequences}
  Let $K$ be a field.
  \begin{enumerate}
  \item Every blowing-up resolution of a $K$-variety $X$ is a resolution of $X$.
  \item If a proper $K$-variety $X$ has a resolution, then any valuation $v$ on $K(X)/K$ has a local uniformization.
  \item Let $F/K$ be a finitely generated field extension and $v$ a valuation on $F/K$ which has residue field $Fv = K$.
    If $v$ has a local uniformization, then there exists a $K$-embedding of $F$ into $K(\!(t)\!)$.
  \end{enumerate}
\end{lemma}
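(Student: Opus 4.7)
The plan is to prove the three parts in order. Parts (1) and (2) are essentially formal, while part (3) combines Cohen's structure theorem with a ``generic formal arc'' argument.

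For (1), the blowing-up of $X$ along a proper closed subscheme $Z$ is projective (as $\mathrm{Proj}$ of the Rees algebra), hence proper, and restricts to an isomorphism over the open complement $X \setminus Z$, which is nonempty and dense in the integral scheme $X$. Combined with the regularity of $Y$ assumed in the definition of a blowing-up resolution, this makes $Y \to X$ a resolution.

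For (2), the composition $Y \to X \to \mathrm{Spec}(K)$ is proper. Since $v$ is trivial on $K$, the generic point inclusion $\mathrm{Spec}(F) \to Y$ is a $K$-morphism, and by the valuative criterion of properness it extends uniquely to a $K$-morphism $\mathrm{Spec}(\mathcal{O}_v) \to Y$; the image $y$ of the closed point is a centre of $v$ on which $\mathcal{O}_v$ dominates the regular local ring $\mathcal{O}_{Y,y}$, giving the local uniformization.

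For (3), let $(Y, y)$ be a local uniformization of $v$. Since $\mathcal{O}_v$ dominates $\mathcal{O}_{Y,y}$ the residue field $\kappa(y)$ embeds into $Fv = K$, and as $K \subseteq \kappa(y)$ automatically, $\kappa(y) = K$, so $y$ is a regular $K$-rational point. By Cohen's structure theorem for complete regular equicharacteristic local rings with residue field $K$, $\widehat{\mathcal{O}_{Y,y}} \cong K[[x_1, \ldots, x_d]]$ where $d = \dim \mathcal{O}_{Y,y}$, and Krull's intersection theorem ensures $\mathcal{O}_{Y,y}$ embeds into its completion; hence $F = \mathrm{Frac}(\mathcal{O}_{Y,y})$ sits inside $\mathrm{Frac}(K[[x_1, \ldots, x_d]])$.

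The final step, embedding $F$ into $K(\!(t)\!)$, is the main obstacle. A continuous $K$-algebra embedding of all of $K[[x_1, \ldots, x_d]]$ into $K[[t]]$ cannot exist when $d \geq 2$, since its image would be a subring of Krull dimension at most one. Instead I would construct a specific surjection $\psi \colon K[[x_1, \ldots, x_d]] \twoheadrightarrow K[[t]]$ sending $x_1 \mapsto t$ and $x_i \mapsto f_i(t)$ for $i \geq 2$, with $f_i \in t K[[t]]$ to be chosen; its kernel is the height-$(d{-}1)$ prime $\mathfrak{p} = (x_2 - f_2(x_1), \ldots, x_d - f_d(x_1))$, whose quotient is isomorphic to $K[[t]]$. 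Arranging $\mathfrak{p} \cap \mathcal{O}_{Y,y} = (0)$ makes the composition $\mathcal{O}_{Y,y} \hookrightarrow K[[x_1, \ldots, x_d]] \twoheadrightarrow K[[t]]$ injective, and passage to fraction fields then yields the sought embedding $F \hookrightarrow K(\!(t)\!)$. The existence of suitable $f_i$'s is a genericity statement: for each nonzero $g \in \mathcal{O}_{Y,y}$, the condition $g \in \mathfrak{p}$ cuts out a proper ``subvariety'' in the space of formal arcs through $y$, and a sufficiently generic choice of $(f_2, \ldots, f_d)$ avoids all such constraints. Verifying this genericity carefully is the most delicate step of the argument.
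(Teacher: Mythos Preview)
Your arguments for (1) and (2) are correct and coincide with the paper's: blowing-up along a proper closed subscheme is proper and birational \cite[Proposition 8.1.12(d)]{Liu}, and the valuative criterion for the proper composite $Y \to \operatorname{Spec}(K)$ supplies the regular centre.

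For (3) the paper proceeds differently and much more briefly: it first notes that the hypothesis $Fv = K$ already forces $F/K$ to be separable \cite[Lemma 2.6.9]{FJ}, and then invokes \cite[Lemma 9]{Fehm2011} (alternatively \cite[Theorem 13]{Kuhlmann_Places}) to obtain the embedding $F \hookrightarrow K(\!(t)\!)$ outright. Your Cohen-plus-generic-arc strategy is a reasonable route toward reproving such a lemma, but the step you yourself flag as ``most delicate'' is essentially the entire content of (3), and leaving it unverified is a genuine gap rather than a detail. Two issues deserve emphasis. First, the heuristic ``each nonzero $g\in\mathcal{O}_{Y,y}$ cuts out a proper subset of arc space, so a generic arc avoids them all'' is not a proof: when $K$ is uncountable there are $\lvert K\rvert$ constraints to dodge, and neither Baire category nor a cardinality count is available. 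Second, even the preliminary input that $tK[\![t]\!]$ contains $d{-}1$ elements algebraically independent over $K(t)$ is left unaddressed. A workable completion exploits that $F/K(x_1,\dots,x_d)$ is \emph{finite separable} (here the separability of $F/K$, which you do not mention, becomes essential): one chooses the $f_i$ algebraically independent and then lifts a simple root of a single minimal polynomial via Hensel's lemma in $K[\![t]\!]$. Carrying this out carefully amounts to reproving the result the paper cites.
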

\begin{proof}
  Every blowing-up morphism along a closed proper subscheme is proper and birational \cite[Proposition 8.1.12~(d)]{Liu}, implying (1).
  For (2), let $Y \to X$ be a resolution of $X$.
  Every valuation $v$ on the function field $K(Y)/K$ is centred at some point of $Y$ by the valuative criterion of properness, cf.\ the discussion in \cite[Definition 8.3.17]{Liu}.
  In particular, every such $v$ has a local uniformization, and the same holds for valuations on $K(X) \cong_K K(Y)$.
  In the situation of (3), the field extension $F/K$ is necessarily separable \cite[Lemma 2.6.9]{FJ},
  so the statement follows from \cite[Lemma 9]{Fehm2011};
  alternatively, it can be deduced from
  \cite[Theorem~13]{Kuhlmann_Places}.
\end{proof}

The condition that a finitely generated field extension $F/K$ can be embedded over $K$ into $K(\!(t)\!)$ is especially interesting when $K$ is a \emph{large} (or \emph{ample}) field
in the sense of \cite{Pop}, 
i.e.~$K$ is existentially closed in $K(\!(t)\!)$ in $\mathcal{L}_{\rm ring}$.
Apart from the definition the only important facts for us are
that large fields form an elementary class, cf.~\cite[Remark 1.3]{Pop},
and Ershov's result \cite[Proposition 2.1]{Ershov} that henselian nontrivially valued fields are large.
See \cite[Example 5.6.2]{Jardenpatching} and \cite{Pop2} for more modern accounts
that prove this (using an equivalent definition of largeness),
and see \cite{BFsurvey} for more background on large fields.

We are led to consider the following hypotheses:
\begin{enumerate}[label=(R\arabic*), ref=R\arabic*]
\setcounter{enumi}{-1}
\item\label{R:Blow} For every field $K$ and every $K$-variety $X$ there exists a blowing-up resolution $Y \to X$.
\item\label{R:Mod} For every field $K$ and every $K$-variety $X$ there exists a resolution $Y \to X$.
\item\label{R:LocUnif} For every field $K$, every finitely generated field extension $F/K$ and every valuation $v$ on $F/K$ there exists a local uniformization of $v$.
\item\label{R:Zvalued} For every field $K$ and every nontrivial finitely generated extension $F/K$ such that there exists a valuation $v$ on $F/K$ with residue field $Fv = K$, there also exists a valuation with value group $\mathbb Z$ which has that property.
\item\label{R:ExClosed} Every large field $K$ is existentially closed in every extension $F/K$ for which there exists a valuation $v$ on $F/K$ with residue field $Fv = K$. 
\end{enumerate}

\begin{proposition}
The following implications hold true:
$\eqref{R:Blow} \Rightarrow \eqref{R:Mod} \Rightarrow \eqref{R:LocUnif} \Rightarrow \eqref{R:Zvalued} \Leftrightarrow \eqref{R:ExClosed}$
\end{proposition}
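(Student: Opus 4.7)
The proof proceeds through four implications. The first three are relatively direct consequences of Lemma~\ref{lem:resolutionConsequences} and standard model-theoretic facts, while $\eqref{R:ExClosed} \Rightarrow \eqref{R:Zvalued}$ is the delicate core.

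For $\eqref{R:Blow} \Rightarrow \eqref{R:Mod}$ I would cite Lemma~\ref{lem:resolutionConsequences}(1) directly. For $\eqref{R:Mod} \Rightarrow \eqref{R:LocUnif}$, given a finitely generated $F/K$ and a valuation $v$ on $F/K$, I would present $F$ as the function field of a projective (hence proper) $K$-variety---for instance, the projective closure of some affine model---apply $\eqref{R:Mod}$ to get a resolution, and invoke Lemma~\ref{lem:resolutionConsequences}(2). For $\eqref{R:LocUnif} \Rightarrow \eqref{R:Zvalued}$, Lemma~\ref{lem:resolutionConsequences}(3) together with $\eqref{R:LocUnif}$ yields a $K$-embedding $F \hookrightarrow K(\!(t)\!)$; the restriction of the $t$-adic valuation to $F$ then has value group in $\mathbb{Z}$, residue sandwiched between $K$ and $K(\!(t)\!)v_t = K$, and is nontrivial---otherwise the residue map would give a $K$-algebra injection $F \hookrightarrow K$, forcing $F = K$---so its value group is $\mathbb{Z}$ and its residue is $K$.

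For $\eqref{R:Zvalued} \Rightarrow \eqref{R:ExClosed}$, let $K$ be large and $\phi(\bar a) = \exists\bar y\,\psi(\bar y,\bar a)$ an existential formula with $\bar a \in K$ satisfied in $F$ by some $\bar b \in F$. The subfield $F_0 := K(\bar b)$ is finitely generated, and $v|_{F_0}$ has residue $K$ by sandwiching. If $F_0 = K$ we are done; otherwise $\eqref{R:Zvalued}$ provides a $\mathbb{Z}$-valued valuation on $F_0/K$ with residue $K$, whose completion (a complete equicharacteristic DVR with residue $K$) is isomorphic to $K[[t]]$ by Cohen's structure theorem, giving $F_0 \hookrightarrow K(\!(t)\!)$ over $K$. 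Largeness of $K$ (its existential closure in $K(\!(t)\!)$) then descends $\phi(\bar a)$ to $K$.

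The hard direction is $\eqref{R:ExClosed} \Rightarrow \eqref{R:Zvalued}$. Given a finitely generated nontrivial $F/K$ with $v$ on $F/K$ and $Fv = K$, I set $L := K(\!(s)\!)$, large by Ershov's theorem. A short check shows $K$ is relatively algebraically closed in $F$: any algebraic-over-$K$ element lies in $\mathcal O_v$, and subtracting its residue (which lies in $K$) gives an algebraic element of positive $v$-value, hence zero. So $F/K$ is regular, $F \otimes_K L$ is a domain, and $E := \mathrm{Frac}(F \otimes_K L)$ is well-defined. The Gauss-type extension $w$ of $v$ to $E$ that is trivial on $L$ satisfies $Ew = L$, so $\eqref{R:ExClosed}$ yields that $L$ is existentially closed in $E$. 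The remaining task is to upgrade this into a $\mathbb{Z}$-valued valuation on $F/K$ with residue $K$, equivalently (by Cohen applied in reverse) a $K$-embedding $F \hookrightarrow K(\!(s)\!)$. My plan is to apply ex-closedness to the existential statement ``some affine model $X$ of $F$ has a smooth $L$-point'' (smoothness being a nonvanishing-of-Jacobian-minors condition), valid in $E$ because the generic point is smooth, to obtain a smooth $L$-point $\bar l$ of $X$; at $\bar l$ the formal local ring is $L[\![u_1,\ldots,u_d]\!]$ with $d = \dim X$, and evaluating at $d$ algebraically-independent-over-$K$ elements of $\mathfrak m_L$ (abundant in $L$, which has infinite transcendence degree over $K$) should produce a $K$-embedding $F \hookrightarrow L$. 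The main obstacle is verifying that this evaluation is injective on $F$ over $K$---i.e., that the specialization through the formal disc descends to a generic point of $X$ viewed as a $K$-scheme---which requires a careful analysis combining smoothness, the formal structure, and the genericity of the evaluation parameters.
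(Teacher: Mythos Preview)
For $\eqref{R:Blow}\Rightarrow\eqref{R:Mod}\Rightarrow\eqref{R:LocUnif}\Rightarrow\eqref{R:Zvalued}$ and $\eqref{R:Zvalued}\Rightarrow\eqref{R:ExClosed}$ your argument is essentially identical to the paper's: invoke the three parts of Lemma~\ref{lem:resolutionConsequences} in order, and for the last implication pass to the completion (the paper does this via Proposition~\ref{prop:complete_structure}) and use largeness.

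For $\eqref{R:ExClosed}\Rightarrow\eqref{R:Zvalued}$ the paper gives no argument and simply cites \cite[Corollary~10]{Fehm2011}. Your direct attack is in effect a reconstruction of the proof of that corollary, and the overall strategy is sound: base-change to the large field $L=K(\!(s)\!)$, apply $\eqref{R:ExClosed}$ to the inclusion $L\subseteq E=\mathrm{Frac}(F\otimes_K L)$ to obtain a smooth $L$-point on a model $X$ of $F/K$, and exploit $\mathrm{trdeg}(L/K)=\infty$ to turn this into a $K$-embedding $F\hookrightarrow L$. Two places need more care. First, your ``Gauss-type extension'' $w$ on $E$ with residue field $L$: a valuation $w$ on $E$ dominating the localisation of $\mathcal{O}_v\otimes_K L$ at $\mathfrak{m}_v\otimes_K L$ certainly satisfies $w|_F=v$, $w|_L$ trivial and $Ew\supseteq L$, but the equality $Ew=L$ is not automatic from this construction and needs either a short additional argument or an appeal to the standard theory of constant reductions. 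Second, the gap you flag at the end is genuine but closes cleanly: by the implicit function theorem in the henselian field $L$, a neighbourhood of the smooth $L$-point $\ell$ in $X(L)$ is parametrised by $d=\dim X$ coordinates ranging freely over small $s$-adic balls; since each such ball has infinite transcendence degree over any finitely generated extension of $K$ inside $L$, one may pick a nearby point $\ell'$ whose $d$ free coordinates are algebraically independent over $K$, whence the image of $\ell'$ in the $K$-scheme $X$ has residue field of transcendence degree $d$ over $K$ and must therefore be the generic point, yielding $F=K(X)\hookrightarrow L$. (Note also that to conclude $F/K$ is regular, and hence that $F\otimes_K L$ is a domain, you need separability of $F/K$ in addition to relative algebraic closedness; this is provided by \cite[Lemma~2.6.9]{FJ}, as in the proof of Lemma~\ref{lem:resolutionConsequences}(3).)
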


\begin{proof}
The implication $\eqref{R:Blow} \Rightarrow \eqref{R:Mod}$ follow from
Lemma \ref{lem:resolutionConsequences}(1).
For $\eqref{R:Mod} \Rightarrow \eqref{R:LocUnif}$
first choose any proper $K$-variety $X$ with $K(X)\cong_K F$,
and then apply Lemma \ref{lem:resolutionConsequences}(2).
By Lemma \ref{lem:resolutionConsequences}(3), $\eqref{R:LocUnif}$
implies that every $F/K$ as in $\eqref{R:Zvalued}$ can be embedded into $K(\!(t)\!)$,
and the restriction of the $t$-adic valuation to $F$ then satisfies $Fv=K$ and $vF \cong \mathbb{Z}$, which shows that $\eqref{R:LocUnif}\Rightarrow\eqref{R:Zvalued}$.
If $F/K$ is as in $\eqref{R:ExClosed}$, then
$\eqref{R:Zvalued}$ gives a valuation $v$ on $F/K$ with $Fv=K$ and $vF=\mathbb{Z}$.
The completion $(\hat{F},\hat{v})$ of $(F,v)$ is then isomorphic over $K$ to $(K(\!(t)\!),v_t)$, cf.~Proposition~\ref{prop:complete_structure} below,
so the large field $K$ is existentially closed in $\hat{F}$ and therefore in particular in $F$,
which shows that $\eqref{R:Zvalued} \Rightarrow \eqref{R:ExClosed}$.
Finally, we have that $\eqref{R:ExClosed} \Rightarrow \eqref{R:Zvalued}$ by \cite[Corollary~10]{Fehm2011}.
\end{proof}

The resolution of singularities that Denef and Schoutens assume
is $\eqref{R:Blow}$, see Section \ref{sec:DS}.
In our main theorem (Theorem \ref{thm:main}), we will work under the weaker condition \eqref{R:ExClosed}.

\begin{remark}\label{rem:Rn}
  The terminology on resolution of singularities is not entirely standardized.
  Depending on the author, either a resolution or a blowing-up resolution in our terminology might be called a resolution, a desingularization or a weak desingularization.
  For a resolution $Y \to X$, it may additionally be demanded that it is an isomorphism over the regular locus of $X$.
  For varieties $X$ over a field $K$ of characteristic zero, the existence of a blowing-up resolution in this strong sense was established in \cite{Hironaka}.
  Therefore all five hypotheses above are valid when restricted to fields $K$ of characteristic zero.

  In general, it is known that every variety of dimension three or less has a resolution of singularities (see \cite{CossartPiltant_ResolutionArithmetical} and the literature referenced there), and it follows that \eqref{R:LocUnif}, \eqref{R:Zvalued} and \eqref{R:ExClosed} hold restricted to field extensions $F/K$ of transcendence degree at most three.

  Local uniformization is occasionally stated in a stronger version (see for instance the introduction to \cite{Temkin_InsepLocalUnif}), where one demands that for any given $K$-variety $X$ with function field $K(X) \cong_K F$ on which $v$ is centred, the uniformizing $Y$ can be chosen to dominate $X$, i.e.\ to come with a birational morphism $Y \to X$ compatible with the isomorphism $K(X) \cong F \cong K(Y)$.
  The best result known here seems to be that \eqref{R:LocUnif} holds up to replacing $F$ by a purely inseparable finite extension, see \cite[Section 1.3]{Temkin_InsepLocalUnif}.
  It follows that \eqref{R:Zvalued} and \eqref{R:ExClosed} hold when restricted to perfect base fields $K$.
  In fact, this restricted form of \eqref{R:ExClosed} was already proven earlier in \cite[Theorem 17]{Kuhlmann_Places}.
  Less deep is the fact 
  (immediate from~\cite[Lemma 2.6.9(b), Proposition 11.3.5]{FJ} and \cite[Lemma 9]{Fehm2011})
  that \eqref{R:Zvalued} and \eqref{R:ExClosed}
  also hold when restricted to {\em pseudo-algebraically closed} base fields $K$,
  i.e.~fields $K$ for which every geometrically integral $K$-variety has a $K$-rational point.
\end{remark}

\begin{remark}
  Desingularization problems are also studied for more general schemes, rather than only varieties over fields.
  For instance, it is conjectured that for any reduced quasi-excellent scheme $X$ there exists a proper birational morphism $Y \to X$ with $Y$ regular \cite[Remarque (7.9.6)]{EGA4_2}.
  This is a generalization of \eqref{R:Mod} since any variety over a field is quasi-excellent.
  Similarly, local uniformization is also studied in this more general setting, see \cite{Temkin_InsepLocalUnif}.
  The condition of excellence also appears in our own results below, but for reasons unrelated to resolution of singularities -- indeed, we only need resolution in the shape of the weak condition \eqref{R:ExClosed}.
\end{remark}

\begin{remark}\label{rem:Kostas}
  Even more variations of resolutions are possible.
  These include the desingularization of a pair of a reduced quasi-excellent scheme $X$ and a nowhere dense closed subscheme $Z$, see \cite[Introduction]{Temkin_DesingCharacZero}.

  A resolution hypothesis of this type is used in \cite{Kostas_tame} to conditionally obtain an axiomatization for existential theories of certain, not necessarily discrete, valuation rings.
  The approach there is based on the geometric method of Denef--Schoutens, which we discuss in Section \ref{sec:DS}.

  The first available preprint of \cite{Kostas_tame} in Section 5 also announces (with sketch proofs) results of this kind conditional only on variants of \eqref{R:LocUnif}.
  These do not appear in the published version
  but are included in \cite{Kostas_thesis}.
\end{remark}

\section{The Denef--Schoutens algorithm}
\label{sec:DS}

\noindent
We now discuss the algorithm given by Denef and Schoutens, with a view towards improving the statements given in \cite{DS}, addressing some subtle points of the algorithm, and avoiding scheme-theoretic language at least to some extent.
Our own technique is entirely different, so the reader interested in new results can skip ahead to Section \ref{sec:axiomatization}.

Throughout,
$R$ is a fixed henselian discrete valuation ring with uniformizer $\pi$ and fraction field $K$, $p > 0$ the characteristic of $K$,
and $\kappa = R/(\pi)$ the residue field of $R$.
Note that $\pi$ is necessarily transcendental
over the prime field $\mathbb{F}_p$ of $K$.
We view $R$ as an $\mathcal{L}_{\rm ring}(\varpi)$-structure $(R,\pi)$ by interpreting $\varpi$ as $\pi$.

\subsection*{Overview}
Theorem \ref{thm:DS2}, concerning the decidability of the existential $\Lval(\varpi)$-theory of $(K,v,\pi)$, is equivalent to the decidability of the existential $\Lring(\varpi)$-theory of $(R, \pi)$ under the hypothesis \eqref{R:Blow},
and the latter is what Proposition \ref{prp:DS} below proves.
To decide the truth of existential $\Lring(\varpi)$-sentences in $R$, one proceeds in three steps:
Deciding the truth of a {\em positive} existential $\Lring(\varpi)$-sentence in $R$ means determining whether a system of polynomial equations has a solution in $R$.
By a variant of a theorem of Greenberg, such a solution exists if and only if there is a solution in the residue ring $R/(\pi^N)$ to the appropriately reduced system, for a sufficiently high (computable) power $\pi^N$.
This reduces the original question about satisfaction of positive existential $\Lring(\varpi)$-sentences in $R$ to a question about the existential $\Lring$-theory of the residue field $\kappa$.

For the second step, one considers existential $\Lring(\varpi)$-sentences (that is, systems of equations and inequations), where the underlying system of equations describes a \emph{regular} affine variety.
A version of the implicit function theorem then shows that any solution in $R$ to the system of equation gives rise to a wealth of other solutions, by perturbing some coordinates.
This then implies that if there is any solution to the equations (which is a computable condition by the first step), there will also be one satisfying the inequations.

In the third step, one considers systems of equations and inequations in full generality; only here does resolution of singularities enter.
Assuming that the system of equations (or rather, the affine scheme described thereby) has a resolution of singularities, one can in fact effectively find such a resolution, simply by enumerating candidates.
In an inductive procedure, one can then relate the existence of solutions in $R$ to the original system of equations and inequations, and solutions to related systems without singularities, which can be handled by the second step.

\subsection*{Step 1: Positive existential $\Lring(\varpi)$-sentences in $R$}

Here we slightly rephrase the results of \cite[Section 3]{DS}.

\begin{lemma}\label{lem:Greenberg}
  Let $f_1, \dotsc, f_n \in \FF_p[t][X_1, \dotsc, X_m]$ be polynomials over the ring $\FF_p[t]$.
  The following are equivalent:
  \begin{enumerate}
  \item The system of equations $\bigwedge_{i = 1}^n f_i(\underline X) = 0$ has a solution in $\kappa[\![t]\!]$.
  \item The system of equations $\bigwedge_{i = 1}^n f_i(\underline X) = 0$ has a solution in $\kappa[t]^h$, the henselization of $\kappa[t]$ at the ideal $(t)$ (i.e.~the valuation ring of the henselization $\kappa(t)^h$ of $\kappa(t)$ with respect to the $t$-adic valuation).
  \item For every $N > 0$, there is a solution in $\kappa[t]/(t^N)$ to the system of equations $\bigwedge_{i = 1}^n f_i(\underline X) = 0$ (more precisely, to the reduction mod $t^N$).
  \end{enumerate}
  Moreover, it suffices to check (3) for one specific large $N$, which can be computed from $n, m$ and the total degree of the $f_i$ (in the variables $t, X_1, \dotsc, X_m$).
\end{lemma}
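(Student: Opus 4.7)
The plan is to handle the three trivial implications by direct inclusion or reduction, invoke Artin's approximation theorem for the non-trivial equivalence of (1) and (2), and reduce the main content of the lemma to Greenberg's theorem on henselian discrete valuation rings.

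First, reducing a solution in $\kappa[\![t]\!]$ or in $\kappa[t]^h$ modulo $t^N$ gives (1) $\Rightarrow$ (3) and (2) $\Rightarrow$ (3), and the inclusion $\kappa[t]^h \hookrightarrow \kappa[\![t]\!]$ immediately yields (2) $\Rightarrow$ (1). For (1) $\Rightarrow$ (2), the henselization $\kappa[t]^h$ is an excellent henselian local ring (being the henselization of a regular localization of a finitely generated $\kappa$-algebra), so Artin's approximation theorem applies: any solution over the completion $\kappa[\![t]\!]$ may be approximated $t$-adically to arbitrary precision by a solution over $\kappa[t]^h$, in particular yielding the desired existence.

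The essential direction is (3) $\Rightarrow$ (2), which is exactly Greenberg's theorem. Applied to the henselian discrete valuation ring $(\kappa[t]^h, t)$, Greenberg's theorem furnishes constants $c, s$ such that any tuple $\bar{x} \in (\kappa[t]^h)^m$ with $f_i(\bar{x}) \equiv 0 \pmod{t^{cN+s}}$ for all $i$ can be perturbed modulo $t^N$ to a genuine solution in $(\kappa[t]^h)^m$. Lifting the given solution from $\kappa[t]/(t^N)$ arbitrarily to $\kappa[t]^h$ and taking $N \geq c + s$ then produces the required solution.

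The main obstacle is the effectivity assertion of the ``moreover'': one needs $c$ and $s$ to be bounded uniformly by a \emph{computable} function of $n$, $m$, and the total degree of the $f_i$. This demands inspecting Greenberg's inductive proof, which decomposes along singularity strata and applies Hensel's lemma on the smooth locus, and extracting effective degree bounds at each step (via effective elimination theory or an effective Nullstellensatz). Once such a uniform bound on $c + s$ is in place, the sufficient $N$ is plainly computable from the input data.
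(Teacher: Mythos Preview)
Your proof of the equivalence of (1)--(3) is correct and essentially the same as the paper's; your direct argument for (1) $\Rightarrow$ (2) via Artin approximation is fine but redundant, since (1) $\Rightarrow$ (3) $\Rightarrow$ (2) already closes the cycle.

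The real divergence is in the ``moreover'' clause. The paper does not extract bounds from Greenberg's proof at all; it simply cites the effective Artin approximation theorem of Becker--Denef--Lipshitz--van den Dries \cite[Theorem 6.1]{UltraproductsApproximationI}, whose computability of $N$ comes from a compactness-plus-proof-enumeration argument (and is accordingly hopeless in practice, as the paper itself remarks afterwards). Your alternative---mining Greenberg's inductive argument directly for explicit bounds---is a genuinely different route and would, if it worked, yield something stronger. But your sketch does not carry it out: Greenberg's proof is a Noetherian induction along singular strata, and converting such an argument into explicit bounds on $c,s$ in terms of degrees alone is real work, not a routine consequence of ``effective elimination theory or an effective Nullstellensatz.'' You would in particular need to verify that the resulting bounds are uniform in $\kappa$, since the claimed $N$ must depend only on the $\mathbb{F}_p[t]$-data of the $f_i$ and not on the field over which one seeks solutions; this is plausible (all auxiliary loci in the induction stay defined over $\mathbb{F}_p(t)$), but you have not argued it. As written, the effectivity claim is asserted rather than proved, whereas the paper discharges it with a single citation.
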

\begin{proof}
  It is clear that $(2)\Rightarrow (1)$ (since $\kappa[t]^h \subseteq \kappa[\![t]\!]$) and $(1)\Rightarrow(3)$ (by reduction modulo $t^N$).
  The implication $(3)\Rightarrow(2)$ is Greenberg's result \cite[Corollary 2]{Greenberg}.
  The reduction to one specific large $N$ comes down to an \emph{effective} version of Greenberg's theorem, which is a special case of the effective Artin approximation given in \cite[Theorem 6.1]{UltraproductsApproximationI} (take $n=1$ and $\alpha = 0$ in the notation there).
\end{proof}

\begin{lemma}\label{lem:reduce_to_residue_field}  
Given finitely many polynomials $f_1, \dotsc, f_n \in \FF_p[T,X_1, \dotsc, X_m]$, one can effectively find finitely many polynomials $g_1, \dotsc, g_{n'} \in \FF_p[Y_1, \dotsc, Y_{m'}]$ such that the system of equations $\bigwedge_{i=1}^n f_i(\pi,\underline X) = 0$ has a solution in $R$ if and only if the system of equations $\bigwedge_{j=1}^{n'} g_j(\underline Y) = 0$ has a solution in $\kappa$.
  In particular, the positive existential $\Lring(\varpi)$-theory of $(R,\pi)$ is decidable relative to the existential $\Lring$-theory of $\kappa$.
\end{lemma}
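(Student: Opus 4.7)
The plan is to combine the effective Greenberg theorem of Lemma~\ref{lem:Greenberg} with Cohen's structure theorem for equicharacteristic Artinian local rings, reducing solvability in $R$ of the given system to solvability in $\kappa$ of an effectively constructed polynomial system.

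First, from Lemma~\ref{lem:Greenberg} extract an integer $N$, computable from $n$, $m$, and the total degrees of the $f_i$, such that the system $\bigwedge_i f_i(t,\underline X)=0$ has a solution in $\kappa[[t]]$ if and only if it has one modulo $t^N$. The Artinian quotient $R/(\pi^N)$ is an equicharacteristic local $\FF_p$-algebra of length $N$ with principal maximal ideal $(\pi)$; Cohen's structure theorem supplies a coefficient field, so we obtain an $\FF_p$-algebra isomorphism $R/(\pi^N)\cong \kappa[t]/(t^N)$ sending $\pi$ to $t$ (after absorbing a unit into $t$). Under this identification, a mod-$\pi^N$ solution of $\bigwedge_i f_i(\pi,\underline X)=0$ in $R$ is the same datum as a mod-$t^N$ solution of $\bigwedge_i f_i(t,\underline X)=0$ in $\kappa[t]/(t^N)$.

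Next, I would construct the $g_j$ explicitly: introduce fresh variables $Y_{i,k}$ for $1\leq i\leq m$, $0\leq k<N$, form the polynomial $f_i\bigl(T,\sum_k Y_{1,k}T^k,\ldots,\sum_k Y_{m,k}T^k\bigr)\in \FF_p[T,\underline Y]$, and take the coefficients of $T^0,\ldots,T^{N-1}$. These $nN$ polynomials in $\FF_p[\underline Y]$ are the desired $g_j$: a tuple $(a_{i,k})\in\kappa^{mN}$ is a common zero of them precisely when $\bigl(\sum_k a_{i,k}t^k\bigr)_i$ solves the system modulo $t^N$ in $\kappa[t]/(t^N)$.

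The equivalence now falls out: a solution in $R$ reduces mod $\pi^N$ to a solution in $R/(\pi^N)\cong\kappa[t]/(t^N)$, hence to a zero of the $g_j$ in $\kappa$; conversely, a zero in $\kappa$ yields a mod-$\pi^N$ solution in $R$, and lifting this to an honest $R$-solution is precisely the (effective) Greenberg/Artin approximation property for $R$, available in the Denef--Schoutens framework because $R$ is excellent (condition~\ref{cond:DS3} of Theorem~\ref{thm:DS2}). This final lift is the main obstacle, and is where the nontrivial use of excellence of $R$ (as opposed to excellence of the auxiliary ring $\kappa[[t]]$) enters. Finally, the ``in particular'' clause is immediate: any positive existential $\Lring(\varpi)$-sentence reduces effectively, via disjunctive normal form, to finitely many systems of the shape $\bigwedge_i f_i(\pi,\underline X)=0$, and each such system is then decided by running the construction above and querying the oracle for $\mathrm{Th}_\exists(\kappa)$.
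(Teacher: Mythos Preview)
Your outline is broadly right, but the backward implication has a genuine gap relative to the lemma as stated. The standing hypotheses of the section only require $R$ to be a henselian discrete valuation ring; excellence is \emph{not} assumed (and the paper explicitly notes afterwards that it can be dispensed with here). Your lift from a mod-$\pi^N$ solution in $R/(\pi^N)$ to an honest solution in $R$ is precisely Artin approximation for $R$, which can fail for non-excellent $R$. Moreover, even granting excellence, the bound $N$ you extract from Lemma~\ref{lem:Greenberg} is a bound for $\kappa[\![t]\!]$, not a priori for $R$; you would need to argue separately that the same $N$ serves as an Artin bound for $R$. So as written you recover only the original Denef--Schoutens version, not the sharper statement the paper proves.

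The paper's backward direction sidesteps both issues by never invoking approximation in $R$ at all. From a solution in $\kappa[t]/(t^N)$, Lemma~\ref{lem:Greenberg} already yields a solution in $\kappa[t]^h$ (not just $\kappa[\![t]\!]$). Since $\kappa[t]^h=\bigcup_{\kappa_0}\kappa_0[t]^h$ as $\kappa_0$ ranges over finitely generated subfields of $\kappa$, the solution lies in some $\kappa_0[t]^h$. Because $\kappa_0/\FF_p$ is separably generated and $R$ is henselian, there is a partial section $\kappa_0\to R$ of the residue map, which extends to an embedding $\kappa_0[t]^h\hookrightarrow R$ sending $t\mapsto\pi$; pushing the solution through this embedding gives the required $R$-point. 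This partial-section trick is the missing idea. The remainder of your plan---the Cohen identification $R/(\pi^N)\cong\kappa[t]/(t^N)$, the explicit Weil-restriction construction of the $g_j$, the forward direction by reduction, and the deduction of the ``in particular'' clause---agrees with the paper.
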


\begin{proof}
The ring $R$ embeds into its completion $\widehat{R}$.
By the Cohen structure theorem
(see also Proposition \ref{prop:complete_structure} and Proposition \ref{prop:complete_lift} below)
there is an isomorphism
$\varphi \colon \widehat{R}\rightarrow\kappa[\![t]\!]$
that sends $\pi$ to $t$.
For each $N>0$, $\varphi$ induces an isomorphism 
$$
\varphi_{N} \colon R/(\pi^N)\cong\hat{R}/(\pi^N)\rightarrow\kappa[\![t]\!]/(t^N) \cong \kappa[t]/(t^N)
$$
that maps $\pi+(\pi^{N})$ to $t+(t^{N})$.

Let $N > 0$ be the specific $N$ given in the last point of Lemma \ref{lem:Greenberg},
computed from the polynomials
$f_i(t,\underline{X})\in\mathbb{F}_{p}[t][X_{1},\ldots,X_{m}]$.
We claim that the system of equations
$(\textsc{i})\colon \bigwedge_{i=1}^n f_i(\pi,\underline X) = 0$
has a solution in $R$
if and only if
the reduction of
$(\textsc{ii})\colon \bigwedge_{i=1}^n f_i(t,\underline X) = 0$
has a solution in
$\kappa[t]/(t^{N})$.

One direction is trivial: if there is a solution to
(\textsc{i})
in $R$ then there is a solution
to the reduction of
(\textsc{i})
in $R/(\pi^{N})$,
and via the isomorphism $\varphi_{N}$
there is a solution to the reduction of 
(\textsc{ii})
in
$\kappa[t]/(t^{N})$.
Conversely, 
suppose a solution
to the reduction of
(\textsc{ii})
exists in
$\kappa[t]/(t^{N})$.
By our choice of $N$, 
there is a solution to
(\textsc{ii})
in $\kappa[t]^{h}$.
Since $\kappa[t]^h$ is the union of $\kappa_0[t]^h$ where $\kappa_0$ ranges over the finitely generated subfields of $\kappa$,
we have a solution
to
(\textsc{ii})
in $\kappa_0[t]^h$ for some $\kappa_0$.
By \cite[Lemma 2.3]{AF},
there exists a partial section $\kappa_0 \rightarrow R$ of the residue map  $R\rightarrow\kappa$
(see Definition~\ref{def:partial_section}).
This extends to an embedding $\kappa_{0}[t]^{h}\rightarrow R$, sending $t$ to $\pi$.
Thus we have a solution to
(\textsc{i})
in $R$, proving the claim.

The condition that the reduction of
(\textsc{ii})
has a solution in $\kappa[t]/(t^N)$
is easily translated into
the solvability
in $\kappa$
of a system of 
finitely many polynomial equations over
$\mathbb{F}_{p}$
(the $g_j$ of the statement) by a standard interpretation (or Weil restriction) argument, since $\kappa[t]/(t^N)$ is an algebra of finite dimension over $\kappa$.
\end{proof}

\begin{remark}\label{rem:relatively-decidable_0}
  Let us briefly describe the formal meaning of ``one can effectively find'' and ``relatively decidable'' in the statement of Lemma~\ref{lem:reduce_to_residue_field}.

  Define an injection of the set of $\Lring(\varpi)$-terms and $\Lring(\varpi)$-formulas into $\mathbb{N}$
  by a standard Gödel coding.
  Every $\Lring(\varpi)$-term with free variables $X_1, \dotsc, X_n$ induces a polynomial in $\mathbb{F}_p[T, X_1, \dotsc, X_n]$ in the natural way, where the indeterminate $T$ takes the place of the constant symbol $\varpi$.
  We now obtain a Gödel numbering of $\mathbb{F}_p[T, X_1, \dotsc, X_n]$ by assigning to every polynomial the minimal Gödel number of an $\Lring(\varpi)$-term which induces it.
  The set of Gödel numbers of polynomials in $\mathbb{F}_p[T, X_1, \dotsc, X_n]$ is a decidable set of natural numbers, and
  mapping the Gödel number of two polynomials in $\mathbb{F}_p[T, X_1, \dotsc, X_n]$ to the Gödel number of their sum (respectively, their product) is a computable function.
  (This is essentially equivalent to asserting that the numbering gives an explicit representation of the ring $\mathbb{F}_p[T, X_1, \dotsc, X_n]$ in the sense of \cite[§2]{EffectiveProcedures}.)

  The first part of Lemma~\ref{lem:reduce_to_residue_field} now means that there exists a Turing machine which takes as input the codes of finitely many polynomials $f_1, \dotsc, f_n$, and produces as output the codes of finitely many polynomials $g_1, \dotsc, g_{n'}$ with the given property.
  The second part of the lemma asserts that there exists a Turing machine that takes as input
the code
of a positive existential $\mathcal{L}_{\rm ring}(\varpi)$-sentence $\varphi$
and outputs yes or no according to whether
$(R,\pi)\models\varphi$,
using an oracle 
(as described in \cite[Chapter 4]{Shoenfield})
which for the code of 
an existential $\mathcal{L}_{\rm ring}$-sentence $\psi$ decides whether $\kappa\models\psi$.
  In other words, the set of codes of positive existential $\Lring(\varpi)$-sentences satisfied by $(R, \pi)$ is Turing reducible to the set of codes of sentences in $\mathrm{Th}_\exists(\kappa)$.
\end{remark}

\begin{remark}
Although in this section we have fixed a ring $R$, we note that the
algorithm of Lemma~\ref{lem:reduce_to_residue_field} to produce the polynomials $g_{j}$ from the $f_{i}$ does not depend on $R$.
This is because the interpretation (or Weil restriction) argument used is uniform in $\kappa$,
since
$\kappa[t]/(t^{N})$
is an $N$-dimensional $\kappa$-algebra 
with multiplication defined by structure constants in $\mathbb{F}_{p}$
that do not themselves depend on $\kappa$.
In particular,
$\kappa[t]/(t^{N})=\mathbb{F}_{p}[t]/(t^{N})\otimes_{\mathbb{F}_{p}}\kappa$
is an extension of scalars.
\end{remark}

\begin{remark}
  As remarked in \cite[Section 6]{UltraproductsApproximationI}, the effective version of Greenberg's theorem used in the reduction in Lemma~\ref{lem:reduce_to_residue_field} is hopeless in practice since the computability of the quantity $N$
  comes from an argument involving an enumeration of proofs in some proof calculus.
  The same holds true for the algorithm that we obtain 
  in Theorem \ref{thm:main}.
\end{remark}

\begin{remark}
  The version of Lemma \ref{lem:reduce_to_residue_field} given in \cite[Proposition 3.5]{DS} is significantly stronger, being phrased for an arbitrary equicharacteristic excellent henselian local domain instead of an equicharacteristic henselian discrete valuation ring, and allowing finitely many parameters.
  In the proof, this comes down to replacing Greenberg's theorem by a more general version of Artin approximation.

  On the other hand, note that we can dispense with the excellence condition on the valuation ring $R$.
  This is essentially due to us only allowing parameters from $\FF_p[\pi]$.
\end{remark}

\subsection*{Step 2: Existential $\Lring(\varpi)$-sentences in $R$, with a regularity condition}

For this step, let us fix $f_1, \dotsc, f_n \in \FF_p[\pi][X_1, \dotsc, X_m]$ such that the system of equations $\bigwedge_{i=1}^n f_i(\underline X) = 0$ describes an affine $\FF_p(\pi)$-variety $V$, i.e.\ the ideal 
$$
 \mathfrak{f}:=(f_1, \dotsc, f_n) \unlhd \FF_p(\pi)[X_1, \dotsc, X_m]
$$ 
is prime.
Let $d$ be the dimension of $V$.

Recall that a solution $\underline x$ to the system of equations in some extension field $F/\FF_p(\pi)$ is called \emph{smooth} if the Jacobian condition is satisfied, i.e.\ the matrix $(\frac{\partial f_i}{\partial X_j}(\underline x))_{ij}$ has rank $m-d$.

\begin{lemma}\label{lem:henselian_dense}
  Let $g \in \FF_p[\pi][X_1, \dotsc, X_m]$ be a polynomial which is not in the ideal $\mathfrak{f}$, so that $g$ does not vanish identically on the variety $V$.
If there is a solution $\underline x \in R^m$ to the system of equations $\bigwedge_{i=1}^n f_i(\underline X) = 0$ such that the $K$-point described by $\underline x$ is smooth, then there exists such a solution with $g(\underline x) \neq 0$.
\end{lemma}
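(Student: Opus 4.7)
The plan is to use the smoothness of $\underline x$ to parametrize a $\pi$-adic neighborhood of $\underline x$ in $V(R)$ by a polydisc via multivariate Hensel's lemma, and then to find a point in this neighborhood at which $g$ does not vanish. After relabeling, I may assume the minor $\Delta := \det(\partial f_i/\partial X_{d+j})_{1 \leq i,j \leq m-d}$ is nonzero at $\underline x$, with $e := v(\Delta(\underline x)) < \infty$. The closed subscheme $V' := V(f_1, \dotsc, f_{m-d})$ is then smooth of dimension $d$ at $\underline x$, hence has a unique irreducible component through $\underline x$; since every component of $V_K$ has dimension $d$ (by going-down for the faithfully flat extension $\FF_p(\pi) \subset K$), the component of $V_K$ containing $\underline x$ must coincide with this unique component of $V'_K$. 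Consequently there exists $h \in R[\underline X]$ with $h(\underline x) \neq 0$ such that $h f_i \in (f_1, \dotsc, f_{m-d})$ in $K[\underline X]$ for every $i > m-d$.

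A standard multivariate Hensel's lemma for $R$ applied to $f_1, \dotsc, f_{m-d}$ now yields: for every $(T_1, \dotsc, T_d) \in R^d$ with $v(T_i)$ sufficiently large in terms of $e$, there exist unique $y_{d+1}, \dotsc, y_m \in R$ close to $x_{d+1}, \dotsc, x_m$ solving $f_i(\underline y) = 0$ for $i \leq m-d$, where $\underline y := (x_1 + T_1, \dotsc, x_d + T_d, y_{d+1}, \dotsc, y_m)$; in fact the $y_j - x_j$ are given by power series in $\underline T$ with coefficients in $R[\Delta^{-1}]$. Choosing $v(T_i)$ large enough also ensures $h(\underline y) \neq 0$, which combined with the previous paragraph forces $f_i(\underline y) = 0$ for all $i \leq n$; so every such $\underline y$ is an $R$-point of $V$.

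It remains to arrange $g(\underline y) \neq 0$. Composition yields a power series $G(\underline T) := g(\underline y(\underline T)) \in K[\![T_1, \dotsc, T_d]\!]$, which I claim is nonzero: since $g \notin \mathfrak{f}$ and (by going-down again) the minimal prime of $\mathfrak{f} K[\underline X]$ corresponding to the component of $V_K$ through $\underline x$ contracts to $\mathfrak{f}$, the image of $g$ in the regular local ring $\mathcal{O}_{V_K, \underline x}$ is nonzero, and hence so is its image $G$ in the completion $K[\![T_1, \dotsc, T_d]\!]$. Writing $G = G_{k_0} + G_{k_0+1} + \cdots$ with $G_{k_0} \neq 0$ the homogeneous component of lowest degree, and using that $R$ is an infinite integral domain to pick $\underline s \in R^d$ with $G_{k_0}(\underline s) \neq 0$, I would then take $\underline T := (\pi^N s_1, \dotsc, \pi^N s_d)$ for $N$ sufficiently large, so that the leading term $\pi^{Nk_0} G_{k_0}(\underline s)$ dominates in $\pi$-adic valuation and hence $G(\underline T) \neq 0$; the associated $\underline y$ is the desired solution. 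The main obstacle is the local identification of $V$ with $V'$ at $\underline x$ (and the companion nonvanishing of the image of $g$ in $\mathcal{O}_{V_K, \underline x}$), both relying on going-down for $\FF_p(\pi) \subset K$ to relate the $\FF_p(\pi)$-prime $\mathfrak{f}$ to its base change.
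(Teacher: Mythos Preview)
Your proposal is correct and follows the same strategy the paper sketches (and attributes to \cite[Theorem~2.4]{DS}): use multivariate Hensel/implicit function theorem to parametrize a $\pi$-adic neighbourhood of the smooth point $\underline x$ in $V(R)$ by a polydisc, and then locate a point in this neighbourhood where $g$ does not vanish. The paper only outlines this and mentions, as one option, a reduction to curves via a generic hyperplane section, whereas you work directly with the power series $G(\underline T)$ in $d$ variables and give a leading-term argument; your handling of the local identification of $V_K$ with $V'_K$ at $\underline x$ and of the nonvanishing of $g$ in $\mathcal{O}_{V_K,\underline x}$ via going-down for the flat extension $\FF_p(\pi)\hookrightarrow K$ is more explicit than anything in the paper's sketch.
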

\begin{proof}
  This is \cite[Theorem 2.4]{DS}, or at least the special case of it which we will need below.
  We only briefly describe the idea of the proof:

  The statement comes down to an elaborate application of Hensel's lemma, or the implicit function theorem in henselian fields.
  Starting with the given solution $\underline x \in R^m$, perturb $d$ variables by a small amount (in the valuation topology) and solve for the remaining variables (which is where smoothness is required).
  By wisely choosing the initial perturbation, one can ensure that the inequation $g(\underline X) \neq 0$ becomes satisfied, since $g$ does not vanish identically on $V$.
  (For instance, one may reduce to the case where the $f_i$ describe an affine curve over $K$ by a generic hyperplane section argument, so the condition $g(\underline X) \neq 0$ is satisfied by all but finitely many points on the curve, in which case almost all small perturbations will be as required.)
\end{proof}

Below we want to work in a context where the affine $\FF_p(\pi)$-variety $V$ is not necessarily smooth (i.e.\ not all solutions to the equation system in all extension fields satisfy the Jacobian condition),
but instead satisfies the condition of regularity, i.e.\ the coordinate ring $\FF_p(\pi)[X_1, \dotsc, X_m]/\mathfrak{f}$ is a regular ring in the sense of commutative algebra.
In general, smooth varieties are regular, and over a perfect field the converse holds \cite[Corollary~4.3.32 and Corollary 4.3.33]{Liu}.
In our situation we still have the following:

\begin{lemma}\label{lem:regsmooth}
  Suppose that $V$ is regular.
  Then any solution $\underline x \in K^m$ to the system of equations $\bigwedge_{i=1}^{n}f_{i}(\underline X) = 0$ is smooth, i.e.\ satisfies the Jacobian condition.
\end{lemma}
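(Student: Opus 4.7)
The plan is to reduce to a Jacobian criterion over a non-perfect base field by showing that the residue field at the point of $V$ corresponding to $\underline{x}$ is separable over $k := \FF_p(\pi)$, a property forced by the valuation structure on $K$. The first step is to show that $K/k$ itself is separable. The element $\pi$ is not a $p$-th power in $K$, because $\pi = y^p$ with $y \in K$ would give $v(\pi) = p \cdot v(y) \in p\mathbb{Z}$, contradicting $v(\pi) = 1$. Since $[k:k^p] = p$ and $\pi$ represents the non-trivial coset, MacLane's linear disjointness criterion yields that $K^p$ and $k$ are linearly disjoint over $k^p$, which is the separability of $K/k$.

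Given $\underline{x} \in K^m$ satisfying the equations, I would consider the kernel $\mathfrak{p}$ of the induced $k$-algebra homomorphism $A := k[X_1,\dots,X_m]/\mathfrak{f} \to K$. The residue field $\kappa := \mathrm{Frac}(A/\mathfrak{p})$ embeds into $K$, and since separability is inherited by subextensions, $\kappa/k$ is separable. The regularity hypothesis on $V$ then gives that $B := A_\mathfrak{p}$ is a regular local ring essentially of finite type over $k$ with residue field $\kappa$.

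Finally, I invoke the classical criterion that for a Noetherian local $k$-algebra essentially of finite type whose residue field is separable over $k$, regularity is equivalent to formal smoothness over $k$, equivalently to $\Omega_{B/k}$ being $B$-free of rank $\dim B + \mathrm{trdeg}(\kappa/k)$. By the dimension formula for integral $k$-algebras of finite type this rank equals $d$. Via the fundamental exact sequence $\mathfrak{f}/\mathfrak{f}^2 \otimes_A \kappa \to \kappa^m \to \Omega_{A/k} \otimes_A \kappa \to 0$, this freeness translates into the Jacobian matrix at the underlying point $y$ having rank $m-d$; since matrix rank is preserved under field extension, the Jacobian at $\underline{x}$ also has rank $m-d$.

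The main obstacle is locating the precise commutative-algebra reference for ``regular plus separable residue field implies (formally) smooth''; the essential new ingredient is the valuation-theoretic separability of $K/k$, without which the lemma could fail (for instance, $\mathrm{Spec}\,\FF_p(t^{1/p})$ is regular as a scheme but not smooth as an $\FF_p(t)$-variety).
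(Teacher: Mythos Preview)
Your proposal is correct and follows essentially the same approach as the paper: both arguments establish that $K/\FF_p(\pi)$ is separable via the uniformizer, deduce that the residue field of the corresponding point of $V$ is separable over $\FF_p(\pi)$, and then invoke the fact that a regular point with separable residue field is smooth. The paper simply cites \cite[Proposition (17.15.1)]{EGA4_4} for this last step, whereas you unpack it via formal smoothness, freeness of $\Omega_{B/k}$, and the conormal sequence---but this is exactly the content of that EGA reference, so the two proofs are the same in substance.
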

\begin{proof}
  The field extension $K/\FF_p(\pi)$ is separable as $\pi$ does not have a $p$-th root in $K$ and hence the fields $K$ and $\FF_p(\pi)^{1/p}=\FF_p(\pi^{1/p})$ are linearly disjoint over $\FF_p(\pi)$.
  The tuple $\underline x$ describes an element of $V(K)$, i.e.\ a point $P$ of the scheme $V$ together with an $\FF_p(\pi)$-embedding of the residue field of $P$ into $K$.
  Thus the residue field of $P$ is separable over $\FF_p(\pi)$.
  As $P$ is regular by assumption, \cite[Proposition (17.15.1)]{EGA4_4} forces the point $P$ to be smooth, so the Jacobian condition holds.
\end{proof}

Now suppose that $V$ is regular, and $g \in \FF_p[\pi][X_1, \dotsc, X_m]$ is not in $\mathfrak{f}$.
In this situation, satisfaction of the statement $\exists \underline X (\bigwedge_{i=1}^n f_i(\underline X) = 0 \wedge g(\underline X) \neq 0)$ in $R$ can now be effectively reduced to an existential statement about the residue field $\kappa$:
By Lemma \ref{lem:henselian_dense} and Lemma \ref{lem:regsmooth}, the condition $g(\underline X) \neq 0$ can be dropped, and the remaining formula is handled by the previous step (Lemma \ref{lem:reduce_to_residue_field}).

\subsection*{Step 3: Arbitrary existential $\Lring(\varpi)$-sentences in $R$, using resolution of singularities}

\hspace{1cm}

Let $f_1, \dotsc, f_n, g \in \FF_p[\pi][X_1, \dotsc, X_m]$.
We wish to determine whether there exists $\underline x \in R^m$ with $f_i(\underline x) = 0$ for all $i = 1, \dotsc, n$, and $g(\underline x) \neq 0$.

We may assume that the ideal $\mathfrak{f}=(f_1, \dotsc, f_n) \unlhd \FF_p(\pi)[X_1, \dotsc, X_m]$ is prime, i.e.\ that the $f_i$ define an affine $\FF_p(\pi)$-variety $V$:
Indeed, by primary decomposition there exist ideals $I_1, \dotsc, I_k \unlhd \FF_p(\pi)[X_1, \dotsc, X_m]$ which are primary, so in particular the radicals $\sqrt{I_1}, \dotsc, \sqrt{I_k}$ are prime ideals, and $\mathfrak{f} = I_1 \cap \dotsb \cap I_k$.
Then a tuple $\underline x \in R^m$ lies in the vanishing locus of $f_1, \dotsc, f_n$ if and only if it lies in the vanishing locus of $\sqrt{I_j}$ for some $j = 1, \dotsc, k$, so we may replace the $f_i$ by a collection of generators for each of the $\sqrt{I_j}$, running all subsequent steps for all $j = 1, \dotsc, k$.
The computation of the ideals $\sqrt{I_j}$ (or rather, generating sets thereof) is completely explicit \cite{Seidenberg_ConstrInAlgebra}.

We can check algorithmically whether the $\FF_p(\pi)$-variety $V$ is regular:
For this we investigate the $\FF_p$-scheme described by the $f_i$ (i.e.\ we ``spread out'' $V$ to a scheme over $\FF_p[\pi]$  and hence a scheme over $\FF_p$, by interpreting $\pi$ as another indeterminate) and check whether the non-regular locus intersects the generic fibre of the map to $\operatorname{Spec}(\FF_p[\pi])$.
The regular locus can be computed explicitly since it coincides with the smooth locus over the perfect field $\FF_p$, which is given by explicit polynomial inequations (once the dimension of the scheme is computed).

Suppose that $V$ is regular.
We can check algorithmically whether $g$ vanishes identically on the zero locus of the $f_i$, i.e.\ whether $g$ is contained in $\mathfrak{f}$.
If yes, then there certainly is no solution to $\bigwedge_{i=1}^n f_i(\underline X) = 0 \wedge g(\underline X) \neq 0$.
Otherwise, Step 2 is applicable.
In other words, we can reduce to $\kappa$ the question of whether there is a tuple in $R$ as required.

Suppose now that $V$ defined by the $f_i$ is not regular.
Assuming hypothesis \eqref{R:Blow}, there exists a morphism $V' \to V$ which is a blowing-up along some closed proper subscheme, and where $V'$ is a regular $\FF_p(\pi)$-variety.
The morphism $V' \to V$ is projective \cite[Proposition 8.1.22]{Liu}, and so we can see $V'$ as a closed subscheme of $\mathbb{P}_V^{m'}$ for suitable $m'$.
Thus $V'$ (usually not affine) is described by finitely many polynomials $h_1, \dotsc, h_{n'} \in \FF_p(\pi)[X_1, \dotsc, X_m, Y_0, \dotsc, Y_{m'}]$ which are homogeneous in the variables $Y_j$ (but not usually in the $X_i$), and include the polynomials $f_1, \dotsc, f_n$ defining $V$.
We may assume that the defining polynomials $h_i$ have coefficients in $\FF_p[\pi]$ by scaling suitably.
A point of $V'$ (in some field containing $\FF_p(\pi)$) is given by a zero $(\underline x, \underline y)$ of the $h_i$, where $\underline y$ is not the zero tuple, and we identify solution tuples that differ only by a scaling of $\underline y$.
The morphism $V' \to V$ is given on points by simply dropping the variables $Y_0, \dotsc, Y_{m'}$.

We can cover $V'$ by affine open subvarieties $V'_0, \dotsc, V'_{m'}$, where $V'_i$ is obtained from the description of $V'$ by setting $Y_i$ to $1$, i.e.\ dehomogenizing at $Y_i$.
This eliminates the need to work with non-affine varieties.

The morphism $V' \to V$ is birational as a blowing-up along a closed proper subscheme, thus it is an isomorphism above some dense open $U \subseteq V$.
Concretely, let us suppose that $V_0' \neq \emptyset$ by permuting the variables $Y_i$ if necessary, so that $V_0' \subseteq V'$ is a dense open subvariety.
Then $V' \to V$ being birational means that there is $u \in \FF_p[\pi][X_1, \dotsc, X_m]$, not in $\mathfrak{f}$, such that $V'_0 \to V$ is an isomorphism above the open set $U \subseteq V$ defined by $u(\underline X) \neq 0$, i.e.\ the ring homomorphism 
$$
 \varphi_u \colon \FF_p(\pi)[X_1, \dotsc, X_m, 1/u(\underline X)]/(f_1, \dotsc, f_n) \to \FF_p(\pi)[X_1, \dotsc, X_m, Y_0, \dotsc, Y_{m'}, 1/u(\underline X)]/(h_1, \dotsc, h_{n'}, Y_0 - 1)
$$
is an isomorphism.

Assuming the existence of $V' \to V$ as above, i.e.\ projective and birational with $V'$ regular, such a morphism can in fact be found algorithmically:
To do this, one simply searches exhaustively for all the defining data ($m'$, polynomials $h_1, \dotsc, h_{n'}$, $u$, and an inverse $\psi_u$ to the ring homomorphism $\varphi_u$ above), and checks that these data satisfy the conditions, i.e.\ the $V'_i$ are all regular and $\psi_u$ is indeed an inverse to $\varphi_u$.

Let $S$ be the set of
tuples $\underline x \in R^m$ satisfying $f_i(\underline x) = 0$ for all $i$ and $g(\underline x) \neq 0$.
To check whether $S=\emptyset$, we now proceed as follows.
First check whether there is some tuple $(\underline x, \underline y) \in R^m \times R^{m'}$ which gives a $K$-point of one of $V'_0, \dotsc, V'_{m'}$ and satisfies $g(\underline x) \neq 0$.
This can be accomplished using the work already done, since each $V'_i$ is a regular affine  $\FF_p(\pi)$-variety.
If such a tuple $(\underline x, \underline y)$ exists, then $\underline x\in S$, and we are done.

Hence let us suppose that no such tuple $(\underline x, \underline y)$ exists.
Then there exists no $\underline x \in R^m$ with $g(\underline x) \neq 0$ which gives a $K$-point of $U$:
Namely, if there were such $\underline x$ which was additionally a $K$-point of $U$, then we could lift to a $K$-point $(\underline x, \underline y) \in K^{m+m'+1}$ of $V'$, and by scaling we could take $\underline y$ to be in $R^{m'+1}$ with one coordinate equal to $1$, i.e.\ $(\underline x, \underline y)$ would describe a point on some $V'_i$.

Thus any $\underline x \in S$ must lie in the lower-dimensional $V \setminus U$.
We continue the algorithm by replacing $V$ with the algebraic set $V \setminus U$, i.e.\ by adding $u$ to the list of polynomials $f_1, \dotsc, f_n$.
Since $V \setminus U$ has lower dimension than $V$, this procedure eventually terminates.

We have shown how to determine whether the system $\bigwedge_{i=1}^n f_i(\underline X) = 0 \wedge g(\underline X) \neq 0$ has a solution in $R$.
The question of whether any given existential $\Lring(\pi)$-sentence holds in $R$ can be effectively reduced to a disjunction of finitely many systems of this form, using the disjunctive normal form and replacing conjunctions of inequations $g_1(\underline X) \neq 0 \wedge g_2(\underline X) \neq 0$ by a single inequation $(g_1 \cdot g_2)(\underline X) \neq 0$.
We have proven:
\begin{proposition}[{\cite[Theorem 4.3]{DS}}]\label{prp:DS}
  Assume \eqref{R:Blow}.
  The existential $\Lring(\varpi)$-theory of $(R,\pi)$ is decidable relative to the existential $\Lring$-theory of the residue field $\kappa$.
\end{proposition}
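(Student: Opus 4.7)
The plan is to execute the three-step reduction sketched above Proposition~\ref{prp:DS}. Any existential $\Lring(\varpi)$-sentence in $(R,\pi)$ can be put in disjunctive normal form, and since $g_1 \neq 0 \wedge g_2 \neq 0$ is equivalent to $g_1 g_2 \neq 0$, it suffices to decide sentences of the shape
\[
\exists \underline X\Bigl(\,\bigwedge_{i=1}^n f_i(\underline X) = 0 \;\wedge\; g(\underline X) \neq 0\,\Bigr), \qquad f_i, g \in \mathbb{F}_p[\pi][\underline X].
\]
By Seidenberg's effective primary decomposition, we may further assume that $\mathfrak{f} = (f_1, \dotsc, f_n) \unlhd \mathbb{F}_p(\pi)[\underline X]$ is prime, defining an affine $\mathbb{F}_p(\pi)$-variety $V$. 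The outer induction is on $d = \dim V$.

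For the purely equational case ($g = 1$) I would follow Lemma~\ref{lem:reduce_to_residue_field}: passing through the Cohen isomorphism $\widehat{R} \cong \kappa[\![t]\!]$ sending $\pi \mapsto t$, the effective Greenberg theorem of Lemma~\ref{lem:Greenberg} reduces solvability in $R$ to solvability modulo $t^N$ for a computable $N$, which translates by Weil restriction into a single existential $\Lring$-question over $\kappa$; a solution in $\kappa[t]^h$ lifts back to $R$ via a partial section of the residue map (\cite[Lemma 2.3]{AF}) that extends to an embedding $\kappa_0[t]^h \hookrightarrow R$ sending $t \mapsto \pi$. If in addition $V$ is regular and $g \notin \mathfrak{f}$, then Lemma~\ref{lem:regsmooth} (using separability of $K / \mathbb{F}_p(\pi)$, which holds because $\pi$ has no $p$-th root in $K$) forces every $K$-point of $V$ to be smooth, and Lemma~\ref{lem:henselian_dense} then shows that an $R$-point satisfying $g \neq 0$ exists whenever some $R$-point of $V$ does; this case is therefore algorithmically handled by the previous sentence.

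The main obstacle is the general case, where \eqref{R:Blow} enters. Regularity of $V$ is itself testable, by spreading $V$ out over $\mathbb{F}_p[\pi]$ and intersecting the smooth locus over the perfect field $\mathbb{F}_p$ with the generic fibre. If $V$ is regular, the previous paragraph applies; otherwise \eqref{R:Blow} supplies a blowing-up resolution $V' \to V$ with $V'$ regular. Being projective, $V'$ sits inside $\mathbb{P}^{m'}_V$ and is described by homogeneous-in-$\underline Y$ polynomials $h_1, \dotsc, h_{n'} \in \mathbb{F}_p[\pi][\underline X, \underline Y]$ extending the $f_i$, together with an affine chart cover $V'_0, \dotsc, V'_{m'}$, a witness $u \notin \mathfrak{f}$ to birationality on the open $U = \{u \neq 0\} \subseteq V$, and a ring-theoretic inverse $\psi_u$ to the canonical map $\varphi_u$ of localized coordinate rings. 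All of this finite data can be found by exhaustive enumeration and verified effectively (regularity of each $V'_i$ via the smooth-locus computation, birationality by checking $\psi_u \circ \varphi_u = \mathrm{id}$); termination of the search is guaranteed because \eqref{R:Blow} asserts that some valid tuple of data exists. Given the resolution, I would first apply Step~2 on each regular chart $V'_i$ to test whether some $(\underline x, \underline y) \in R^{m+m'+1}$ gives an $R$-point of $V'_i$ with $g(\underline x) \neq 0$; if yes, $\underline x$ solves the original problem. If no, then since $V'\to V$ is an isomorphism over $U$ and any $R$-point of $U$ lifts through scaling of $\underline Y$ (to get a coordinate equal to $1$) to a point on some $V'_i$, every remaining $R$-solution must lie in $V \setminus U$. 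Adjoining $u$ to the list of $f_i$ strictly decreases the dimension, so the recursion terminates. The delicate points are exactly (a) the termination-by-enumeration argument for finding the resolution, which hinges on \eqref{R:Blow}, and (b) the scaling argument forcing missed $R$-points onto the exceptional locus.
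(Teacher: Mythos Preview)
Your proposal is correct and follows essentially the same three-step reduction as the paper: effective Greenberg for the positive existential case, Lemmas~\ref{lem:henselian_dense} and~\ref{lem:regsmooth} for the regular case, and the enumeration-for-a-resolution plus dimension-descent argument under \eqref{R:Blow} for the general case. The only omission is the trivial check that $g \in \mathfrak{f}$ forces no solution, and the birationality verification should confirm that $\psi_u$ is a two-sided inverse of $\varphi_u$, but neither is a genuine gap.
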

Theorem \ref{thm:DS2} follows since the $\Lval(\varpi)$-structure $(K, v,\pi)$, 
where $v$ is the valuation with valuation ring $R$, 
is quantifier-freely interpretable in the $\mathcal{L}_{\rm ring}(\varpi)$-structure $(R,\pi)$.

\begin{remark}
  Comparing with the phrasing in \cite[Theorem 4.3]{DS}, it is clear in our presentation that the assumption of excellence of $R$ may be dropped.
  The phrasing in \cite[Theorem 4.3]{DS} also only states that the $\Lring(\varpi)$-theory of $(R,\pi)$ is decidable ``relative to the existential theory of $\kappa$ and the [$\Lring(\varpi)$]-diagram of $R$''.
  However, it is unclear what is meant by this diagram.
  In particular, if $R$ is uncountable, then the diagram of $R$ in its usual model-theoretic meaning is an uncountable object, and hence cannot be made available to any algorithm as an oracle.
  In any case, the presentation above shows that nothing is required besides the existential $\Lring$-theory of $\kappa$.
\end{remark}

\begin{remark}
  Above we have taken care to perform all computation in polynomial rings over the concrete field $\FF_p(\pi)$ (or over the ring $\FF_p[\pi]$).
  Denef and Schoutens are not always clear on this point.
  For instance, \cite[Lemma 4.2]{DS} makes a claim about an algorithm computing a reduction over the ring $R$, but it is unclear what this means if $R$ is not explicitly presented (for example uncountable).

  While it may appear that such problems can be easily avoided by working over finitely generated subfields (and indeed this is suggested in \cite[Remark 4.1]{DS}), this leads to rather subtle issues in general, since neither reducedness nor regularity is preserved under inseparable base change:
  If $K_0 \subseteq K$ is a subfield over which $K$ is not separable, then a regular variety over $K_0$ may become non-reduced (thus in particular non-regular) over $K$.
  Similarly, one needs to be careful in general to distinguish regularity from smoothness, because these disagree for varieties over imperfect base fields, and so regularity cannot be checked using the Jacobian criterion as suggested in \cite[Remark 4.1]{DS}.

  We can avoid these issues above because the extension $K/\FF_p(\pi)$ is separable and therefore none of the pathologies mentioned can arise.
\end{remark}

\begin{remark}
  Above we have in fact only used that there exists a resolution of the affine variety $V$ by a projective birational morphism, rather than a blowing-up.
  This looks superficially weaker than hypothesis \eqref{R:Blow} or the equivalent \cite[Section 4, Conjecture 1]{DS}, but in fact any projective birational morphism to a quasi-projective variety is a blowing-up \cite[Theorem 8.1.24]{Liu}.
  (Of course, it suffices for us to demand \eqref{R:Blow} for affine varieties.)

  In fact, the argument above can be modified to make do with \eqref{R:Mod} in place of \eqref{R:Blow}.
  To do this, consider the $\FF_p[\pi]$-scheme $\mathcal{V} = \operatorname{Spec}(\FF_p[\pi][X_1, \dotsc, X_m]/(f_1, \dotsc, f_n))$, which has $V$ as above as its generic fibre.
  Then a birational proper morphism $V' \to V$ with $V'$ regular (which exists under the assumption of \eqref{R:Mod}) can be extended to a proper morphism $\mathcal{V}' \to \mathcal{V}$, with an $\FF_p[\pi]$-scheme $\mathcal{V}'$ which has $V'$ as its generic fibre.
  (This step seems to require Nagata's compactification theorem, cf.~\cite[Section (12.15)]{GoertzWedhorn}.)
  Covering $\mathcal{V}'$ by affine $\FF_p[\pi]$-schemes $\mathcal{V}'_0, \dotsc, \mathcal{V}'_{m'}$, one can then proceed as above.
  We omit the details.
\end{remark}

\begin{remark}
  In spite of the technical superiority of scheme-theoretic algebraic geometry over more naive conceptions of varieties, we find that one really can work in the naive language almost throughout, excepting some of the more subtle points surrounding smoothness and regularity which cannot be handled adequately in this way.

  Denef and Schoutens largely use scheme-theoretic language, but not entirely correctly.
  In particular, by an open $W$ of a finite type $R$-scheme $X$ as in \cite[Theorem 2.4 and proof of Theorem 4.3]{DS} they seem to mean a subset of the $R$-points of $X$ defined by finitely many inequations, which is not the same as an open subscheme (which, after all, would itself be a finite type $R$-scheme and could simply replace $X$), but rather an open subscheme of the generic fibre $X_K$.
  An $R$-rational point of $W$ in their language is then an $R$-point of $X$ such that the associated $K$-point of $X_K$ lies on $W$.
  This issue also seems to have been noted independently in \cite[proof of Theorem A]{Kostas_tame}.
\end{remark}

\section{Axiomatization of the existential theory}
\label{sec:axiomatization}

\noindent
In this final section we prove
Theorem~\ref{thm:main},
which is a strengthening of
Theorem~\ref{thm:intro_main}
to allow
parameters from a $\mathbb{Z}$-valued base field $(C,u)$.
Here, by $\mathbb{Z}$-valued we mean that $uC\cong\mathbb{Z}$
(sometimes called {\em discrete}).
A major role in our proof is played by partial sections of residue maps:

\begin{definition}\label{def:partial_section}
Let $(K,v)$ be a valued field.
We denote by ${\rm res}_v \colon \mathcal{O}_v\rightarrow Kv$ the residue map.
A {\em partial section} of ${\rm res}_v$ is a ring homomorphism
$\zeta \colon k\rightarrow\mathcal{O}_v$ with ${\rm res}_v\circ\zeta={\rm id}_k$
for some subfield $k$ of $Kv$.
A {\em section} of ${\rm res}_v$ is a partial section
$\zeta \colon Kv\rightarrow\mathcal{O}_v$ defined on the full residue field $Kv$.
\end{definition}

We will repeatedly have to extend such partial sections.
There are two main settings where this is possible.
The first one is in the context of complete discrete valuation rings:

\begin{proposition}\label{prop:complete_structure}
Let $(K,v)$ be an equicharacteristic complete $\mathbb{Z}$-valued field with uniformizer $\pi$.
Every partial section $\zeta\colon k\rightarrow K$ of ${\rm res}_v$
has a unique extension to an embedding
of valued fields
$\alpha\colon (k(\!(t)\!),v_t)\rightarrow(K,v)$
with $\alpha(t)=\pi$,
which is an isomorphism if $k=Kv$.
\end{proposition}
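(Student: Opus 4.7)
The plan is to define $\alpha$ directly by the formula
$$\alpha\Bigl(\sum_{n \geq N} a_n t^n\Bigr) = \sum_{n \geq N} \zeta(a_n) \pi^n,$$
where convergence of the right-hand series in $(K,v)$ is guaranteed by completeness: since $\res_v(\zeta(a_n)) = a_n$, the element $\zeta(a_n) \in \mathcal{O}_v$ has valuation $0$ when $a_n \neq 0$, so the general term has valuation $\geq n$. By the ultrametric inequality applied to the lowest nonzero term, $v(\alpha(x)) = v_t(x)$ for every nonzero $x \in k(\!(t)\!)$. Hence, once $\alpha$ is shown to be a ring homomorphism, it is automatically an isometric embedding of valued fields.

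To verify the ring-homomorphism property, I would exploit absolute convergence in the ultrametric topology: each partial Laurent sum is mapped to a Laurent polynomial in $\pi$ with coefficients in $\zeta(k)$, and the Cauchy-product formula for the coefficient of $t^n$ in a product $xy$ matches, via the homomorphism $\zeta$, the Cauchy-product formula for the coefficient of $\pi^n$ in $\alpha(x)\alpha(y)$. Taking limits of these finite computations yields $\alpha(xy) = \alpha(x)\alpha(y)$. Additivity is immediate, and the identities $\alpha(t) = \pi$ and $\alpha|_k = \zeta$ are built into the definition.

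For uniqueness, any embedding $\alpha'\colon (k(\!(t)\!), v_t) \to (K, v)$ extending $\zeta$ with $\alpha'(t) = \pi$ is automatically continuous for the valuation topologies (both value groups are $\mathbb{Z}$ and uniformizer maps to uniformizer, so $\alpha'$ is an isometry). Since Laurent polynomials in $t$ with coefficients in $k$ are dense in $k(\!(t)\!)$ and $\alpha'$ is determined on such polynomials by its values on $\zeta(k)$ and at $t$, we get $\alpha' = \alpha$.

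For surjectivity in the case $k = Kv$, I would use a successive-approximation argument: given $x \in \mathcal{O}_v$, set $a_0 = \res_v(x) \in Kv = k$, so that $x - \zeta(a_0) \in \pi\mathcal{O}_v$; iterating with $(x - \zeta(a_0))/\pi$ in place of $x$ produces a sequence $(a_n)_{n \geq 0}$ in $k$ whose partial sums $\sum_{n < N} \zeta(a_n)\pi^n$ converge to $x$, witnessing $x = \alpha\bigl(\sum_n a_n t^n\bigr)$. An arbitrary $x \in K^\times$ is handled by factoring $x = \pi^{v(x)}u$ with $u \in \mathcal{O}_v^\times$ and multiplying by $\alpha(t^{v(x)}) = \pi^{v(x)}$. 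No single step is deep; the main thing to get right is the order of verifications, so that the valuation-preserving and density/continuity arguments are not invoked circularly before convergence and the ring-homomorphism property have been established.
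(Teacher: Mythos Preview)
Your proof is correct and essentially unpacks by hand what the paper obtains in one stroke from the universal property of completions. The paper argues as follows: first extend $\zeta$ uniquely to an embedding of valued fields $\alpha_0\colon (k(t),v_t)\to(K,v)$ with $\alpha_0(t)=\pi$ (this is immediate since $\pi$ is transcendental over $\zeta(k)$ and has the right value); then, since $(k(\!(t)\!),v_t)$ is the completion of $(k(t),v_t)$ and $(K,v)$ is complete, the universal property of completions yields a unique continuous extension $\alpha$. For surjectivity when $k=Kv$, the paper simply cites the standard power series expansion from Serre.

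Your approach defines $\alpha$ directly by the explicit Laurent-series formula and verifies convergence, the ring-homomorphism property, the isometry property, uniqueness via density of Laurent polynomials, and surjectivity via successive approximation. Each of these steps is precisely what one would do to \emph{prove} the universal property of completion in this concrete setting, so the two arguments have the same content. The paper's route is shorter because it outsources these verifications to standard references; yours has the merit of being self-contained and making the formula for $\alpha$ visible, which is sometimes useful downstream.
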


\begin{proof}
The partial section $\zeta$ extends uniquely to an embedding $\alpha_0\colon (k(t),v_t)\rightarrow(K,v)$ with $\alpha_0(t)=\pi$,
and then the existence and uniqueness of the completion give a unique extension $\alpha$ of $\alpha_0$ from the completion $(k(\!(t)\!),v_t)$ of $(k(t),v_t)$
into the complete valued field $(K,v)$.
If $\zeta$ is a section, every element of $K$ has a power series expansion in $t$ with coefficients in $\zeta(K)$, see e.g.~\cite[Chapter II \S4]{Serre},
hence $\alpha$ is surjective.
\end{proof}

\begin{proposition}\label{prop:complete_lift}
Let $(K, v)$ be an
equicharacteristic complete $\mathbb{Z}$-valued field.
Every partial section $\zeta \colon k \to \mathcal{O}_v$ of $\mathrm{res}_v$ with $Kv/k$ separable extends to a section of $\mathrm{res}_v$.
In particular, $\mathrm{res}_v$ has a section.
\end{proposition}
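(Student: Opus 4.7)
The plan is to prove the main claim by Zorn's lemma. I would consider the poset $\mathcal{P}$ of partial sections $\eta \colon k_\eta \to \mathcal{O}_v$ extending $\zeta$ with $Kv/k_\eta$ still separable, ordered by extension. Chains in $\mathcal{P}$ have upper bounds: for a directed union $k_* = \bigcup_\alpha k_\alpha$, separability of $Kv/k_*$ holds because any failure of MacLane's criterion is witnessed by finitely many elements of $Kv$ and finitely many coefficients, which already lie in some single $k_\alpha$ where $Kv/k_\alpha$ is separable. Zorn's lemma then yields a maximal element $\zeta_0 \colon k_0 \to \mathcal{O}_v$.

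The key step will be to show $k_0 = Kv$ by contradiction. Suppose some $a \in Kv \setminus k_0$ exists. If $a$ is algebraic over $k_0$, it is in fact separably algebraic by separability of $Kv/k_0$; letting $p \in k_0[X]$ be its minimal polynomial, the polynomial $\zeta_0(p) \in \mathcal{O}_v[X]$ reduces mod $\mathfrak{m}_v$ to $p(X) \in k_0[X]$, which has $a$ as a simple root since $p'(a) \neq 0$. Hensel's lemma in the complete valued field $(K,v)$ produces a root $\tilde a \in \mathcal{O}_v$ of $\zeta_0(p)$ with $\mathrm{res}_v(\tilde a) = a$, and the assignment $a \mapsto \tilde a$ extends $\zeta_0$ to $k_0(a)$; this extension still lies in $\mathcal{P}$ since $k_0(a)/k_0$ is finite separable and separability of $Kv$ is preserved under such base changes. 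If instead $a$ is transcendental over $k_0$, I would choose $a$ to be an element of a $p$-basis of $Kv$ over $k_0$ (which exists since $Kv/k_0$ is separable) and lift to any $\tilde a \in \mathcal{O}_v$ with residue $a$; for nonzero $h \in k_0[X]$, the value $\zeta_0(h)(\tilde a)$ has nonzero residue $h(a)$ and is thus a unit in $\mathcal{O}_v$, so $a \mapsto \tilde a$ extends $\zeta_0$ to $k_0(a)$, and the $p$-basis property ensures $Kv/k_0(a)$ remains separable. Either case contradicts maximality of $\zeta_0$.

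For the ``in particular'' statement, I would exhibit an appropriate starting $\zeta$. In characteristic zero, take $k = \mathbb{Q}$ with the natural inclusion $\mathbb{Q} \hookrightarrow \mathcal{O}_v$ (the valuation is trivial on $\mathbb{Q}$ in the equicharacteristic case); the extension $Kv/\mathbb{Q}$ is automatically separable. In positive characteristic $p$, take $k = \mathbb{F}_p$ with the natural inclusion $\mathbb{F}_p \hookrightarrow \mathcal{O}_v$; since $\mathbb{F}_p^{1/p} = \mathbb{F}_p$, MacLane's criterion for $Kv/\mathbb{F}_p$ is vacuous and this extension is separable. The main technical obstacle is the transcendental step of the induction: securing the separability of $Kv/k_0(a)$ after extending by a transcendental element requires a judicious choice of $a$ from a $p$-basis of the separable extension $Kv/k_0$, which is the least elementary ingredient of the proof.
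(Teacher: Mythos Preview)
Your Zorn's lemma strategy is the classical approach to Cohen's structure theorem (the paper itself simply cites Roquette), and Cases~1 and the chain argument are correct. However, there is a genuine gap in Case~2 in positive characteristic.

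You assert that when $a \in Kv \setminus k_0$ is transcendental you may instead ``choose $a$ to be an element of a $p$-basis of $Kv$ over $k_0$ (which exists since $Kv/k_0$ is separable)''. Separability guarantees that a relative $p$-basis exists and behaves well, but not that it is nonempty, even when $Kv/k_0$ has positive transcendence degree. Concretely, take $k_0 = \mathbb{F}_p$ and $Kv = \mathbb{F}_p(t^{1/p^\infty})$: the extension $Kv/k_0$ is separable (the base is perfect), every element of $Kv \setminus k_0$ is transcendental over $k_0$, yet $Kv$ is perfect, so $k_0 \cdot Kv^p = Kv$ and the relative $p$-basis is empty. Your case analysis therefore does not rule out that such a $k_0$ is maximal in your poset $\mathcal{P}$, and the argument stalls.

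This is not a cosmetic issue: note that your proof, as written, invokes completeness only through Hensel's lemma, so it would apply verbatim to any henselian $\mathbb{Z}$-valued field --- a red flag, since the proposition is stated for complete fields. The missing case (relative $p$-basis empty, i.e.\ $Kv = k_0 \cdot Kv^{p^n}$ for all $n$) is exactly where completeness enters essentially. One fixes it by writing each $a \in Kv$ as a $k_0$-combination of $p^n$-th powers, lifting arbitrarily, and showing the resulting sequence in $\mathcal{O}_v$ is Cauchy (since two lifts of $b$ have $p^n$-th powers congruent modulo $\mathfrak{m}_v^{p^n}$); the limit then defines a section on all of $Kv$ extending $\zeta_0$. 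Alternatively, one can lift an absolute $p$-basis of $Kv$ extending one of $k_0$ and build compatible sections $Kv \to \mathcal{O}_v/\mathfrak{m}_v^{n}$ inductively, again using completeness to pass to the limit. Either way, this extra argument is the heart of the matter and cannot be omitted.
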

\begin{proof}
  This follows from the structure theory of complete discrete valuation rings,
  e.g.~apply \cite[Satz 1a]{Roquette} with $E_0 = \zeta(k)$ in the notation there.
  The ``in particular'' follows by applying the first part to the partial section defined on the prime field.
\end{proof}

Secondly, we can always extend partial sections after passage to a suitable elementary extension:

\begin{lemma}\label{lem:section}
Let $(K,v)$ be a henselian valued field.
For every partial section $\zeta_{0} \colon k_{0}\rightarrow K$ of ${\rm res}_v$
with $Kv/k_0$ separable
there exists an elementary extension
$(K,v)\prec(K^{*},v^{*})$
with a partial section
$\zeta \colon Kv\rightarrow K^{*}$ of ${\rm res}_{v^*}$
that extends $\zeta_{0}$.
\end{lemma}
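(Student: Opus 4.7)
The plan is to use a compactness argument to glue together partial sections defined on finitely generated subfields of $Kv/k_0$, since $Kv/k_0$ itself need not admit a separating transcendence basis. I would expand $\mathcal{L}_{\rm val}$ by a constant $\bar c$ for each $c \in K$ and a constant $d_\alpha$ for each $\alpha \in Kv$, and let $T$ be the theory in this expansion consisting of the elementary diagram of $(K,v)$ together with, for all $\alpha,\beta \in Kv$, the axioms: $\mathcal{O}(d_\alpha)$ and $d_\alpha - \bar{a_\alpha} \in \mathfrak{m}$ (where $a_\alpha \in \mathcal{O}_v$ is some chosen lift of $\alpha$), $d_{\alpha+\beta} = d_\alpha + d_\beta$ and $d_{\alpha\beta} = d_\alpha d_\beta$, and $d_\alpha = \bar{\zeta_0(\alpha)}$ for $\alpha \in k_0$. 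A model of $T$ is precisely an elementary extension $(K^*,v^*) \succeq (K,v)$ equipped with a partial section $\alpha \mapsto d_\alpha^{K^*}$ of $\mathrm{res}_{v^*}$ on all of $Kv$ extending $\zeta_0$, so it suffices to verify the consistency of $T$.

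By compactness, consistency of $T$ follows from consistency of each finite $T_0 \subseteq T$. Such a $T_0$ mentions only finitely many constants $d_{\alpha_1},\ldots,d_{\alpha_n}$, and is modelled by any elementary extension $(K^*,v^*) \succeq (K,v)$ together with a partial section of $\mathrm{res}_{v^*}$ extending $\zeta_0$ on the finitely generated subfield $k_1 := k_0(\alpha_1,\ldots,\alpha_n) \subseteq Kv$. Now $k_1/k_0$ is separable (as a subextension of $Kv/k_0$) and finitely generated, so by Mac Lane's theorem it admits a separating transcendence basis $B = \{b_1,\ldots,b_m\} \subseteq k_1$, i.e.\ $k_1 = k_0(B)(\gamma)$ with $\gamma$ separable algebraic over $k_0(B)$. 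Taking $(K^*,v^*)$ sufficiently saturated, I would inductively select lifts $\tilde b_i \in \mathcal{O}_{v^*}$ of the $b_i$ that are algebraically independent over $\zeta_0(k_0)$: the partial type expressing that $\tilde b_i$ has residue $b_i$ and is not a root of any nonzero polynomial over $\zeta_0(k_0)(\tilde b_1,\ldots,\tilde b_{i-1})$ is finitely satisfiable, because $\mathfrak{m}_{v^*}$ is infinite (the trivial-valuation case being immediate by $\zeta := \mathrm{id}_{Kv}$) while each polynomial has only finitely many roots. This defines a partial section $\zeta_B \colon k_0(B) \to K^*$; applying Hensel's lemma in the henselian $(K^*,v^*)$ to the image under $\zeta_B$ of the (separable) minimal polynomial of $\gamma$ then furnishes a lift of $\gamma$ and completes the partial section on $k_1$.

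The main conceptual point, and the obstacle that forces the passage to an elementary extension, is that $Kv/k_0$ itself need not possess a separating transcendence basis, since separability of non-finitely-generated field extensions is strictly weaker than this. One therefore cannot invoke Mac Lane on the full $Kv/k_0$ in one step; compactness is what assembles the locally constructed sections into a section defined on all of $Kv$. A secondary technical point is the need to choose the lifts of a transcendence basis algebraically independent over $\zeta_0(k_0)$, and it is precisely this that requires working inside a saturated elementary extension rather than inside $K$.
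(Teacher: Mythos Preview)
Your proof is correct and follows essentially the same compactness strategy as the paper: expand by constants for a prospective section on $Kv$, and verify finite satisfiability by extending $\zeta_0$ over each finitely generated separable subextension $k_1/k_0$. The paper differs only in that it cites \cite[Lemma~2.3]{AF} to extend $\zeta_0$ to a partial section $k_1 \to K$ into $K$ itself, whereas you reprove this lemma by hand via a separating transcendence basis and Hensel's lemma. However, your detour through a saturated elementary extension for this step is unnecessary, and your ``secondary technical point'' is based on a misconception: \emph{any} lifts $\tilde b_1,\ldots,\tilde b_m \in \mathcal{O}_v$ of elements $b_1,\ldots,b_m \in Kv$ that are algebraically independent over $k_0$ are automatically algebraically independent over $\zeta_0(k_0)$. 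Indeed, every nonzero element of $\zeta_0(k_0)$ is a unit in $\mathcal{O}_v$ (its residue is nonzero), so any nontrivial polynomial relation among the $\tilde b_j$ over $\zeta_0(k_0)$ reduces modulo $\mathfrak{m}_v$ to a nontrivial relation among the $b_j$ over $k_0$. Thus the entire finite-step construction already works inside $K$, and no saturation is needed.
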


\begin{proof}
For each subfield $E\subseteq Kv$,
let $\mathcal{L}_{\mathrm{val}}(E,Kv)$ be the language expanding $\mathcal{L}_{\mathrm{val}}$ by new constant symbols
$\{c_{x}\mid x\in E\}$
and
$\{d_{x}\mid x\in Kv\}$.
Let $\rho(a,b)$ be an $\mathcal{L}_{\rm val}$-formula expressing that $a$ and $b$ have the same residue
and
consider the $\mathcal{L}_{\mathrm{val}}(E,Kv)$-theory
\begin{align*}
	T_{E}
&:=
	\{d_{x}\in\mathcal{O}\mid x\in Kv\}\cup
	\{c_{x}\in\mathcal{O}\wedge\rho(c_x,d_x)\mid x\in E\}\cup\{c_{x}+c_{y}=c_{x+y}\wedge c_{x}c_{y}=c_{xy}\mid x,y\in E\}.
\end{align*}
If we expand $(K,v)$ to an $\mathcal{L}_{\mathrm{val}}(k_0,Kv)$-structure $\mathbb{K}_{\zeta_0}$ by interpreting
$c_{x}^{\mathbb{K}_{\zeta_0}}=\zeta_{0}(x)$ 
for $x\in k_{0}$ and
$d_{x}^{\mathbb{K}_{\zeta_0}}$ any choice of element in the valuation ring of $v$ with residue $x$, for each $x\in Kv$,
then $\mathbb{K}_{\zeta_0}\models T_{k_0}$.

Let $\rmDelta$ be a finite subset of $T_{Kv}$.
Then there exists a finitely generated subextension $E/k_{0}$ of $Kv/k_{0}$ such that $\rmDelta$ is an $\mathcal{L}_{\mathrm{val}}(E,Kv)$-theory.
Since $Kv/k_{0}$ is separable, $E/k_{0}$ is separably generated.
Therefore, by \cite[Lemma 2.3]{AF},
$\zeta_{0}$ extends to a partial section
$\hat{\zeta} \colon E\rightarrow K$
of ${\rm res}_v$.
Equivalently,
there is an $\mathcal{L}_{\mathrm{val}}(E,Kv)$-expansion $\mathbb{K}_{\hat{\zeta}}$ of $\mathbb{K}_{\zeta_0}$ to
a model of $T_{E}$.
In particular, $\mathbb{K}_{\hat{\zeta}}$ models $\rmDelta$.

Therefore, $T_{Kv}$ is consistent with the elementary diagram of $\mathbb{K}_{\zeta_0}$,
hence 
by the compactness theorem
there exists an
$\mathcal{L}_{\mathrm{val}}({Kv,Kv})$-structure $\mathbb{K}^*\models T_{Kv}$
whose $\mathcal{L}_{\mathrm{val}}({k_0,Kv})$-reduct is an elementary extension of $\mathbb{K}_{\zeta_0}$.
In particular,
its $\mathcal{L}_{\mathrm{val}}$-reduct $(K^*,v^*)$ is an elementary extension of $(K,v)$,
and
$\zeta \colon Kv\rightarrow K^*$ defined by $x\mapsto c_x^{\mathbb{K}^*}$
is a partial section of ${\rm res}_{v^*}$ that extends $\zeta_0$.
\end{proof}

\begin{proposition}\label{lem:section_full}
Let $(K,v)$ be a henselian valued field.
For every partial section $\zeta_{0} \colon k_{0}\rightarrow K$ of ${\rm res}_v$
with $Kv/k_0$ separable
there exists an elementary extension
$(K,v)\prec(K^{*},v^{*})$
with a section
$\zeta \colon K^*v^*\rightarrow K^{*}$ of ${\rm res}_{v^*}$
that extends $\zeta_{0}$.
\end{proposition}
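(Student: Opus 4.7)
The plan is to iterate Lemma~\ref{lem:section} $\omega$ times, extending the domain of the partial section to more and more of the residue field at each stage, and then to take the union of the resulting elementary chain.

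By recursion I construct an elementary chain of henselian valued fields $(K_n, v_n)_{n<\omega}$ starting with $(K_0, v_0) = (K,v)$, together with an increasing sequence of partial sections $\zeta_n \colon k_n \to K_n$ of $\mathrm{res}_{v_n}$, setting $k_0$ and $\zeta_0$ as given. At each successor step, given $(K_n, v_n)$ and $\zeta_n$, Lemma~\ref{lem:section} produces an elementary extension $(K_n, v_n) \prec (K_{n+1}, v_{n+1})$ and a partial section $\zeta_{n+1} \colon K_n v_n \to K_{n+1}$ of $\mathrm{res}_{v_{n+1}}$ extending $\zeta_n$; I set $k_{n+1} := K_n v_n$.

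The main point to verify is that the separability hypothesis ``$K_n v_n / k_n$ separable'' required by Lemma~\ref{lem:section} is preserved throughout the iteration. For $n = 0$ it holds by assumption. For $n \ge 1$, the elementarity $(K_{n-1}, v_{n-1}) \prec (K_n, v_n)$ as $\mathcal{L}_{\mathrm{val}}$-structures induces an elementary extension $K_{n-1} v_{n-1} \prec K_n v_n$ of residue fields as $\mathcal{L}_{\mathrm{ring}}$-structures, since the residue field is uniformly interpretable in the valued field. Any elementary extension of fields is separable: if a $K^p$-linearly independent tuple from $K$ became $L^p$-linearly dependent in $L$, the resulting existential $\mathcal{L}_{\mathrm{ring}}$-formula with parameters in $K$ would by elementarity admit a witness already in $K$, contradicting the assumed linear independence. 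Hence $K_n v_n / k_n$ is separable and Lemma~\ref{lem:section} applies at every stage; this separability bookkeeping is essentially the only nontrivial point.

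Finally, I put $(K^*, v^*) := \bigcup_n (K_n, v_n)$, which by the Tarski--Vaught elementary chain theorem is an elementary extension of $(K, v)$, and set $\zeta := \bigcup_n \zeta_n$. Any $x \in K^* v^*$ is the residue of some $y \in \mathcal{O}_{v^*}$; as $K^* = \bigcup_n K_n$ we have $y \in K_n$ for some $n$, hence $y \in \mathcal{O}_{v_n}$ and $x = \mathrm{res}_{v_n}(y) \in K_n v_n = \mathrm{dom}(\zeta_{n+1})$. Thus $\zeta$ is defined on all of $K^* v^*$, lands in $\mathcal{O}_{v^*}$ (since each $\zeta_{n+1}$ lands in $\mathcal{O}_{v_{n+1}} \subseteq \mathcal{O}_{v^*}$), and satisfies $\mathrm{res}_{v^*} \circ \zeta = \mathrm{id}$ by compatibility of residue maps along the chain, yielding a section of $\mathrm{res}_{v^*}$ extending $\zeta_0$.
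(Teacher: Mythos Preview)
Your proof is correct and follows essentially the same approach as the paper: iterate Lemma~\ref{lem:section} along an $\omega$-chain, using at each step that the residue field extension is elementary (hence separable), and then pass to the union via the elementary chain theorem. Your write-up spells out more explicitly why elementary field extensions are separable and why the union of the $\zeta_n$ really is a full section, but there is no substantive difference in strategy.
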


\begin{proof}
By Lemma \ref{lem:section} 
there exists an elementary extension $\mathbb{K}_0:=(K,v)\prec\mathbb{K}_1=(K_1,v_1)$ with a partial section
$\zeta_1 \colon Kv\rightarrow K_1$ of ${\rm res}_{v_1}$ extending $\zeta_0$.
Since $Kv\prec K_1v_1$, the extension $K_1v_1/Kv$ is in particular separable, cf.~\cite[Corollary 3.1.3]{Ershov_book},
and so we can iterate Lemma \ref{lem:section} to obtain a chain
of elementary extensions $\mathbb{K}=\mathbb{K}_0\prec\mathbb{K}_1\prec\dots$
and for each $i>0$ a partial section $\zeta_i \colon K_{i-1}v_{i-1}\rightarrow K_i$ of ${\rm res}_{v_i}$
extending $\zeta_{i-1}$.
The direct limit $\mathbb{K}^*=(K^*,v^*):=\varinjlim_i\mathbb{K}_i$ is then an elementary extension of $\mathbb{K}_{0}$
with a partial section $K^*v^*=\varinjlim_i K_{i-1}v_{i-1}\rightarrow \varinjlim_i K_i=K^*$ extending $\zeta_0$.
\end{proof}

Given a field extension $K/C$ we denote by $(K,C)$ the $\mathcal{L}_{\mathrm{ring}}(C)$-structure expanding $K$ in the usual way: the constant symbol $c_{x}$ is interpreted by $x$ itself, for each $x\in C$.
Analogously, $(K,v,C)$ denotes the $\mathcal{L}_{\mathrm{val}}(C)$-structure similarly expanding $(K,v)$,
and $(K,v,\pi,C)$ denotes the $\mathcal{L}_{\mathrm{val}}(\varpi,C)$-structure similarly expanding $(K,v,\pi)$.

\begin{corollary}\label{cor:large}
Assume \eqref{R:ExClosed}.
Let $(K,v)$ be an equicharacteristic henselian valued field with $Kv$ large,
and assume that $v$ is trivial on a subfield $C$ of $K$.
We identify $C$ with its image under ${\rm res}_v$ and assume that
the extension
$Kv/C$ is separable.
Then the existential $\mathcal{L}_{\rm ring}(C)$-theories of $K$ and $Kv$
coincide.
\end{corollary}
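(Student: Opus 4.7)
The plan is to pass to an elementary extension $(K,v) \prec (K^*, v^*)$ that carries a full section of the residue map extending the inclusion $C \hookrightarrow K$, and then to apply \eqref{R:ExClosed} directly to the pair $(K^*v^*, K^*)$.

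First I would apply Proposition~\ref{lem:section_full} to the partial section $\iota \colon C \hookrightarrow K$ of $\mathrm{res}_v$, which is well-defined because $v$ is trivial on $C$ and we have identified $C$ with its image in $Kv$. The separability hypothesis $Kv/C$ is precisely what the proposition requires, so it produces an elementary extension $(K,v) \prec (K^*, v^*)$ together with a section $\zeta \colon K^*v^* \to K^*$ of $\mathrm{res}_{v^*}$ extending $\iota$; in particular $\zeta(c) = c$ for every $c \in C$.

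Next, identify $K^*v^*$ with $\zeta(K^*v^*) \subseteq K^*$. Then $v^*$ is a valuation on the extension $K^*/K^*v^*$ (trivial on $\zeta(K^*v^*)$ since that image lies in $\mathcal{O}_{v^*}^\times$) with residue field $K^*v^*$. Since the residue field is interpretable in $(K,v)$ without parameters, the elementary extension $(K,v) \prec (K^*,v^*)$ gives $Kv \prec K^*v^*$ in $\Lring$; in particular $K^*v^*$ is again large, because largeness is an elementary property. Hypothesis \eqref{R:ExClosed} now yields that $K^*v^*$ is existentially closed in $K^*$ in $\Lring$. Because $\zeta$ restricts to the identity on $C$ and is a field embedding — hence preserves and reflects every quantifier-free $\Lring$-formula with parameters in $C$ — this upgrades to the equality $\mathrm{Th}_\exists(K^*, C) = \mathrm{Th}_\exists(K^*v^*, C)$.

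Finally, $K \prec K^*$ and $Kv \prec K^*v^*$ as $\Lring(C)$-structures (the constants from $C$ being interpreted identically on each side), so chaining yields
\[
\mathrm{Th}_\exists(K,C) \;=\; \mathrm{Th}_\exists(K^*,C) \;=\; \mathrm{Th}_\exists(K^*v^*,C) \;=\; \mathrm{Th}_\exists(Kv,C),
\]
which is the claim. The main obstacle is producing the section $\zeta$ defined on all of $K^*v^*$ rather than just on $C$, since without it the valuation $v^*$ on $K^*$ would not be visible as a valuation on $K^*/K^*v^*$; this is exactly what Proposition~\ref{lem:section_full} supplies, after which the application of \eqref{R:ExClosed} is immediate and the remainder is routine bookkeeping with elementary extensions.
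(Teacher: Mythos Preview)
Your proof is correct and follows the same route as the paper's: apply Proposition~\ref{lem:section_full} to the partial section ${\rm id}_C$ to obtain an elementary extension $(K^*,v^*)$ with a full section $\zeta$, observe that $K^*v^*$ is large as an elementary extension of $Kv$, invoke \eqref{R:ExClosed} to get $\zeta(K^*v^*)\prec_\exists K^*$, and chain through the elementary extensions. Your write-up simply makes explicit a few points (interpretability of the residue field, why $\zeta$ respects the $C$-constants) that the paper leaves implicit.
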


\begin{proof}
By Proposition \ref{lem:section_full}
there is an elementary extension $(K,v)\prec(K^*,v^*)$ with a section $\zeta \colon K^*v^*\rightarrow K^*$ of ${\rm res}_{v^*}$ that extends the partial section ${\rm id}_C$ of ${\rm res}_v$.
Since $Kv$ is large, so is its elementary extension $K^*v^*$,
hence $\zeta(K^*v^*)\prec_\exists K^*$ by \eqref{R:ExClosed}.
So for the existential $\mathcal{L}_{\rm ring}(C)$-theories we get
${\rm Th}_\exists(K)={\rm Th}_\exists(K^*)={\rm Th}_\exists(\zeta(K^*v^*))={\rm Th}_\exists(K^*v^*)={\rm Th}_\exists(Kv)$.
\end{proof}

After these preparations on extending partial sections we are now ready to attack the proof of Theorem \ref{thm:intro_main}.

\begin{definition}\label{def:T}
We denote by $T$ the $\mathcal{L}_{\rm val}(\varpi)$-theory of equicharacteristic henselian valued fields
with distinguished uniformizer.
\end{definition}

\begin{remark}
The class of equicharacteristic henselian valued fields with distinguished uniformizer is elementary.
Moreover, $T$ admits a recursive axiomatization,
for example see
\cite[Section 4]{KuhlmannTame}
for an explicit axiomatization of henselianity.
\end{remark}

\begin{lemma}\label{lem:reduce_to_Lring}
Every existential $\mathcal{L}_{\rm val}(\varpi)$-formula
is equivalent modulo $T$ to an existential
$\mathcal{L}_{\rm ring}(\varpi)$-formula.
\end{lemma}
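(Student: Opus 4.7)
The plan is to find, modulo $T$, existential $\mathcal{L}_{\rm ring}(\varpi)$-formulas equivalent to $\mathcal{O}(t)$ and to $\neg\mathcal{O}(t)$, and then to substitute them into the atomic subformulas of the given existential $\mathcal{L}_{\rm val}(\varpi)$-formula. After writing the given formula in disjunctive normal form and distributing the initial $\exists$ over $\vee$, the task reduces to replacing each literal involving $\mathcal{O}$ by an existential $\mathcal{L}_{\rm ring}(\varpi)$-formula, since the other literals already live in $\mathcal{L}_{\rm ring}(\varpi)$; the newly introduced existential quantifiers are then pulled to the front.

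The literal $\neg\mathcal{O}(t)$ is the easy case. Since $\varpi$ has smallest positive value in any model of $T$, we have $v(t) < 0$ iff $v(1/t) \geq v(\varpi)$ iff $1/(t\varpi) \in \mathcal{O}$, giving the equivalent existential formula $\exists z\,(z t \varpi - 1 = 0)$.

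For $\mathcal{O}(t)$ I propose the Artin--Schreier--style existential definition $\exists z\,(z^2 - z - \varpi t^2 = 0)$. The forward direction is a direct application of Hensel's lemma to $f(Z) = Z^2 - Z - \varpi t^2$ at $Z = 0$: when $v(t) \geq 0$ we have $v(f(0)) \geq v(\varpi) > 0 = 2v(f'(0))$, so a root exists. The reverse direction is what I expect to be the main subtlety. Given a hypothetical root $z$, the key observation is that $v(\varpi)$, as the smallest positive element of a totally ordered abelian group, is divisible by no integer $n \geq 2$. If $v(z) < 0$ then $v(z^2-z) = 2v(z)$, which forces $2(v(z)-v(t)) = v(\varpi)$, contradicting $2$-non-divisibility; if $v(z) \geq 0$ then $v(\varpi t^2) \geq 0$, and the minimality of $v(\varpi)$ among positive values upgrades this to $v(t) \geq 0$.

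The reason for choosing this Artin--Schreier--style formula over the more obvious ``$1 + \varpi t^2$ is a square'' is uniformity: the latter fails in residue characteristic $2$, whereas $Z^2 - Z - \varpi t^2$ satisfies the hypotheses of Hensel's lemma (with $f'(0) = -1$ always a unit) in every equicharacteristic. Thus a single existential $\mathcal{L}_{\rm ring}(\varpi)$-formula works uniformly across all models of $T$.
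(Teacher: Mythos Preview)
Your approach is the same as the paper's, and your treatment of the positive literal $\mathcal{O}(t)$ via $\exists z\,(z^2-z=\varpi t^2)$ is correct and matches the paper's formula $\exists y\,(y^2+y=\varpi x^2)$ up to the substitution $z\mapsto -y$. However, your formula for $\neg\mathcal{O}(t)$ is wrong: $\exists z\,(zt\varpi-1=0)$ is an $\mathcal{L}_{\rm ring}(\varpi)$-formula in which $z$ ranges over the whole field, so it merely asserts that $t\varpi$ is invertible, i.e.\ that $t\neq 0$. It is satisfied by $t=1$, for which $\neg\mathcal{O}(t)$ fails.

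The correction is exactly the step you skipped: having established the equivalence $\neg\mathcal{O}(t)\Leftrightarrow\mathcal{O}\bigl((t\varpi)^{-1}\bigr)$, apply your \emph{own} existential definition of $\mathcal{O}$ to the element $(t\varpi)^{-1}$. Clearing denominators gives the existential $\mathcal{L}_{\rm ring}(\varpi)$-formula
\[
\exists z\;\bigl(\varpi t^2(z^2-z)=1\bigr),
\]
which is precisely the formula the paper obtains (phrased there as the negation of the universal definition $\forall y\,\varpi x^2(y^2+y)\neq 1$). With this fix, the rest of your argument goes through as written.
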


\begin{proof}
This follows since the existential $\mathcal{L}_{\rm ring}(\varpi)$-formula $\varphi(x)$
given by $\exists y\; y^2+y=\varpi  x^2$
defines the valuation ring $\mathcal{O}_v$ in every $(K,v,\pi)\models T$, and then so does the universal formula
$\neg\varphi((\varpi x)^{-1})$
given by $\forall y\;\varpi x^2(y^2+y)\neq1$.
These formulas basically go back to Julia Robinson \cite{JR},
see \cite{FehmJahnke} for more on definable valuations.
\end{proof}

Let $R$ be a discrete valuation ring with field of fractions $C$, $u$ the valuation on $C$ with $\mathcal{O}_u = R$, and $(\hat C, \hat u)$ the completion of $(C, u)$.
Then $R$ is \emph{excellent} if and only if $\hat{C}/C$ is separable.
This is compatible with the more general definition of excellent rings in commutative algebra, see \cite[Corollary 8.2.40(b)]{Liu}, but for discrete valuation rings we may as well take it as our definition.
\begin{remark}
  Although we will not need it below, we note that $R$ is excellent if and only if it is defectless in the sense of valuation theory, i.e.\ the fundamental inequality is an equality for all finite extensions of $C$, see \cite[(1.5)]{KuhlmannTame}.
  Indeed, $\hat C/C$ is separable if and only if $R$ is a Japanese ring by \cite[Corollaire (7.6.6)]{EGA4_2}, which in turn is the case if and only if $R$ is defectless by \cite[Chapitre VI §8 No 5 Théorème 2]{Bourbaki_CommAlg567}.
\end{remark}

\begin{proposition}\label{prop:main}
Assume \eqref{R:ExClosed}.
Let $(C,u)$ be an equicharacteristic $\mathbb{Z}$-valued field with distinguished uniformizer $\pi$ 
such that $\mathcal{O}_u$ is excellent.
Let $(K,v,\pi),(L,w,\pi)\models T$ be extensions of $(C,u,\pi)$ such that
$Kv/Cu$ and $Lw/Cu$ are separable.
If ${\rm Th}_\exists(Kv,Cu)\subseteq{\rm Th}_\exists(Lw,Cu)$, then
${\rm Th}_\exists(K,v,\pi,C)\subseteq{\rm Th}_\exists(L,w,\pi,C)$.
\end{proposition}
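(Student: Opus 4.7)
By Lemma~\ref{lem:reduce_to_Lring} the problem reduces to showing that every existential $\mathcal{L}_{\mathrm{ring}}(\varpi, C)$-sentence satisfied in $(K, \pi, C)$ also holds in $(L, \pi, C)$. The plan is, after passing to suitable elementary extensions of $K$ and $L$, to construct an $\mathcal{L}_{\mathrm{ring}}(\varpi, C)$-embedding of $K$ into the completion $\hat L^*$ of an elementary extension $(L^*,w^*) \succeq (L, w)$, and then to descend from $\hat L^*$ back to $L^*$ (and hence to $L$) via \eqref{R:ExClosed}.

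First, using the hypothesis $\mathrm{Th}_\exists(Kv, Cu) \subseteq \mathrm{Th}_\exists(Lw, Cu)$ together with compactness, pass to elementary extensions $(K^*, v^*, \pi, C) \succeq (K, v, \pi, C)$ and $(L^*, w^*, \pi, C) \succeq (L, w, \pi, C)$ together with a $Cu$-embedding $\iota_0 \colon K^*v^* \hookrightarrow L^*w^*$. The excellence of $\mathcal{O}_u$ gives $\hat C/C$ separable, whence Proposition~\ref{prop:complete_lift} supplies a section $\zeta_C \colon Cu \to \hat C$ of the residue map. Since $\hat K^*$ and $\hat L^*$ are equicharacteristic complete $\mathbb{Z}$-valued fields and the residue extensions $K^*v^*/Cu$, $L^*w^*/Cu$ are separable (separability being inherited from $Kv/Cu$, $Lw/Cu$ under elementary extension), applying Proposition~\ref{prop:complete_lift} once more extends $\zeta_C$ (viewed inside $\hat K^*$ and inside $\hat L^*$ via the natural inclusions $\hat C \subseteq \hat K^*, \hat L^*$) to full sections $\zeta_K \colon K^*v^* \to \hat K^*$ and $\zeta_L \colon L^*w^* \to \hat L^*$, both restricting to $\zeta_C$ on $Cu$.

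By Proposition~\ref{prop:complete_structure}, the sections $\zeta_K$ and $\zeta_L$ yield valued-field isomorphisms $\hat K^* \cong K^*v^*((t))$ and $\hat L^* \cong L^*w^*((t))$ sending $t \mapsto \pi$, and --- because both extend $\zeta_C$ --- both restrict to the same isomorphism $\hat C \cong Cu((t))$. The $Cu$-embedding $\iota_0$ then extends coefficient-wise to a $Cu((t))$-embedding $K^*v^*((t)) \hookrightarrow L^*w^*((t))$, producing a valued-field embedding $\hat\iota \colon \hat K^* \hookrightarrow \hat L^*$ over $\hat C$ and with $\pi \mapsto \pi$. In particular, $\hat\iota$ fixes $C \subseteq \hat C$ pointwise, so restricting to $K^* \subseteq \hat K^*$ gives the desired $\mathcal{L}_{\mathrm{ring}}(\varpi, C)$-embedding $\iota \colon K^* \hookrightarrow \hat L^*$.

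For the descent, given an existential $\mathcal{L}_{\mathrm{ring}}(\varpi, C)$-sentence $\varphi$ witnessed in $K$ by a tuple $\underline x$, the image $\iota(\underline x) \in \hat L^*$ witnesses $\varphi$, and I would apply \eqref{R:ExClosed} to the finitely generated extension $F := L^*(\iota(\underline x)) \subseteq \hat L^*$. Since $L^*$ is henselian and hence large (by Ershov's theorem), it suffices to exhibit a valuation on $F/L^*$ with residue field $L^*$. Such a valuation can be built from the power-series structure $F \subseteq L^*w^*((t))$: one takes a separating transcendence basis of $F/L^*$, extends an $L^*$-trivial Gauss valuation on the transcendentals, and uses henselianity of $L^*$ to argue that the residue stays equal to $L^*$ in the remaining algebraic step. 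Then \eqref{R:ExClosed} yields $L^* \prec_\exists F$, so $\varphi$ has a witness in $L^*$, and elementarity $L \prec L^*$ finishes the argument. The main obstacle is this last construction of a valuation with residue exactly $L^*$ on $F/L^*$: since no defectlessness or excellence is assumed of $L$ itself, controlling the residue at the algebraic step requires careful exploitation of the specific embedding of $F$ inside $\hat L^*$; the earlier embedding steps are essentially forced once the matching sections $\zeta_K$, $\zeta_L$ extending $\zeta_C$ are in place, which is where the excellence hypothesis on $\mathcal O_u$ enters essentially.
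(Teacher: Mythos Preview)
Your proof has a genuine gap at the completion step. You assert that $\hat{K}^*$ and $\hat{L}^*$ are complete $\mathbb{Z}$-valued fields, but models of $T$ are only required to possess a uniformizer, not to have value group $\mathbb{Z}$; the value group could be, say, $\mathbb{Z}\times\mathbb{Z}$ ordered lexicographically. Worse, even if $(K,v)$ and $(L,w)$ themselves happen to be $\mathbb{Z}$-valued, once you pass to a sufficiently saturated elementary extension $(L^*, w^*)$ in order to accommodate the embedding $\iota_0$, the value group $w^*L^*$ will be a proper (nonstandard) elementary extension of $\mathbb{Z}$. The completion $\hat{L}^*$ then has that same value group and is certainly not of the form $L^*w^*(\!(t)\!)$, so Propositions~\ref{prop:complete_structure} and~\ref{prop:complete_lift} simply do not apply to it.

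The paper's route is different: after making $(K,v)$ $\aleph_1$-saturated (and $(L,w)$ $|K|^+$-saturated), one does not complete but instead passes to the residue field $Kv^+$ of the finest proper coarsening $v^+$, the one corresponding to the convex subgroup $\mathbb{Z}\cdot v(\pi)$. Saturation makes the induced $\mathbb{Z}$-valued field $(Kv^+,\bar v)$ genuinely complete, and \eqref{R:ExClosed} enters via Corollary~\ref{cor:large} (applied to $(K,v^+)$, where $v^+$ is trivial on $C$ and the residue field $Kv^+$ is large) to give $\mathrm{Th}_\exists(K,C)=\mathrm{Th}_\exists(Kv^+,C)$; the separability of $Kv^+/C$ needed here is precisely where excellence of $\mathcal{O}_u$ is used. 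After the analogous reduction on the $L$ side, both fields are honestly complete discretely valued over $\hat{C}$, the Laurent-series comparison yields a $C$-embedding directly into $Lw^+$, and no descent from a completion is ever required. Your proposed descent from $\hat{L}^*$ to $L^*$ faces a separate obstacle anyway: without excellence of $\mathcal{O}_{w^*}$ the extension $\hat{L}^*/L^*$ can be inseparable, so $F/L^*$ need not admit a separating transcendence basis; and even when it does, henselianity of $L^*$ gives no control over whether the residue field of an extension of your Gauss valuation through the finite separable part equals $L^*$ or a proper finite extension of it.
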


\begin{proof}
Let $\varphi$ be an existential  $\mathcal{L}_{\rm val}(\varpi,C)$-sentence with $(K,v,\pi,C)\models\varphi$. We have to show that $(L,w,\pi,C)\models\varphi$,
and for this we are allowed to replace both $(K,v,\pi,C)$ and $(L,w,\pi,C)$ by arbitrary elementary extensions,
so assume without loss of generality that $(K,v,\pi,C)$ is
$\aleph_{1}$-saturated and
$(L,w,\pi,C)$ is $|K|^+$-saturated.
Since in particular $Lw$ 
is at least $|Kv|^+$-saturated,
and $(Lw,Cu)\models\mathrm{Th}_{\exists}(Kv,Cu)$,
there is a $Cu$-embedding $f \colon Kv\rightarrow Lw$,
cf.~\cite[Lemma 5.2.1]{ChangKeisler}.
By Lemma~\ref{lem:reduce_to_Lring}
we can assume without loss of generality that $\varphi$ is an existential 
$\mathcal{L}_{\rm ring}(\varpi,C)$-sentence.
Furthermore, we can replace each occurrence of $\varpi$ by the constant symbol $c_\pi$
to assume without loss of generality that $\varphi$ is an existential
$\mathcal{L}_{\rm ring}(C)$-sentence.

Since $\pi$ is a uniformizer of $v$, $v(\pi)$ generates a convex subgroup of $vK$ isomorphic to $\mathbb{Z}$.
In particular, 
$v$ has a finest proper coarsening $v^+$.
As $v^+$ is trivial on $C$, we can view 
$Kv^+$ as an extension of $C$.
On $Kv^+$,
$v$ induces a henselian valuation $\bar{v}$ extending $u$ with $\bar{v}(Kv^{+})\cong\mathbb{Z}$ and with uniformizer ${\pi}$.
Since $(K,v)$ 
is $\aleph_1$-saturated,
$(Kv^+,\bar{v})$ is complete, see \cite[Claim on page 411]{AK} and note that
maximal $\mathbb{Z}$-valued fields are complete.
Given that $\mathcal{O}_u$ is excellent, 
that $(Kv^+, \bar{v})$ and $(C, u)$ have a common uniformizer, 
and that the residue extension $Kv/Cu$ of $(Kv^+,\bar{v})/(C, u)$ is separable, 
we have that $Kv^+/C$ is separable by \cite[proof of Lemma 3.6/2]{BLR}\footnote{The ``in particular'' of the statement of the lemma is not correct, but this does not concern us.
Alternatively, the separability of $Kv^+/C$ follows from applying \cite[p.~44, Lemma]{Ershov} to the henselization of $(C, u)$.}.
Since moreover $Kv^+$ is large, because it admits the nontrivial henselian valuation $\bar{v}$, Corollary~\ref{cor:large} (which works under \eqref{R:ExClosed}) shows
that ${\rm Th}_\exists(K,C)={\rm Th}_\exists(Kv^+,C)$, and the analogous argument (note that $|K|^+\geq\aleph_1$) gives that
${\rm Th}_\exists(L,C)={\rm Th}_\exists(Lw^+,C)$.

Since we assumed that $\varphi\in{\rm Th}_\exists(K,C)$ and want to show that
$\varphi\in{\rm Th}_\exists(L,C)$,
we can now replace $(K,v,\pi,C)$ by $(Kv^+,\bar{v},{\pi},C)$ and $(L,w,\pi,C)$ by $(Lw^+,\bar{w},{\pi},C)$,
so that both
$(K,v)$ and $(L,w)$ are complete $\mathbb{Z}$-valued fields
with uniformizer $\pi$
---
note however that we may no longer assume any saturation for either $(K,v)$ or $(L,w)$.
We may then also
replace $C$ by $\hat{C}$
to assume that $(C,u)$ is complete.
By Proposition \ref{prop:complete_lift} there exists a section $\zeta \colon Cu\rightarrow C$ of ${\rm res}_u$, which we can see as a partial section of $\mathrm{res}_v$ and $\mathrm{res}_w$.
Since $Kv/Cu$ and $Lw/Cu$ are separable, applying Proposition \ref{prop:complete_lift}
again yields that
$\zeta$ extends to a section $\xi_K \colon Kv\rightarrow K$ of ${\rm res}_v$,
and to a section
$\xi_L \colon Lw\rightarrow L$ of ${\rm res}_w$.
By Proposition \ref{prop:complete_structure},
$\zeta$, $\xi_K$ and $\xi_L$ extend to isomorphisms
$\gamma \colon (Cu(\!(t)\!),v_t)\rightarrow(C,u)$,
$\alpha \colon (Kv(\!(t)\!),v_t)\rightarrow (K,v)$ and
$\beta \colon (Lw(\!(t)\!),v_t)\rightarrow (L,w)$ with
$\gamma(t)=\pi$,
$\alpha(t)=\pi$ and
$\beta(t)=\pi$,
and $\alpha|_{Cu(\!(t)\!)}=\beta|_{Cu(\!(t)\!)}=\gamma$
by the uniqueness assertion thereof.
Moreover, the $Cu$-embedding $f \colon Kv\rightarrow Lw$ extends uniquely to an embedding
$\iota \colon (Kv(\!(t)\!),v_t)\rightarrow (Lw(\!(t)\!),v_t)$
with $\iota(t)=t$
that is the identity on $Cu(\!(t)\!)$.
Therefore, $\beta\circ\iota\circ\alpha^{-1} \colon (K,v)\rightarrow(L,w)$ is an embedding which restricted to $C$ is $\gamma\circ{\rm id}_{Cu(\!(t)\!)}\circ\gamma^{-1}={\rm id}_{C}$.
\begin{align*}
\renewcommand{\labelstyle}{\textstyle}
    \xymatrix@=2em{
        K
        &&
        Kv(\!(t)\!)
        \ar@{->}[ll]_{\alpha}
        \ar@{^{(}->}[rr]^{\iota}
        &&
        Lw(\!(t)\!)
        \ar@{->}[rr]^{\beta}
        &&
        L
        \\
        &&
        &&
        &&
        \\
        C
        \ar@{-}[uu]_{}^{}
        &&
        Cu(\!(t)\!)
        \ar@{-}[uu]_{}^{}
        \ar@{->}[ll]_{\gamma}
        \ar@{->}[rr]^{\mathrm{id}}
        &&
        Cu(\!(t)\!)
        \ar@{-}[uu]_{}^{}
        \ar@{->}[rr]^{\gamma}
        &&
        C
        \ar@{-}[uu]_{}^{}
    }
\end{align*}

So $(K,C)\models\varphi$ implies that 
$(L,C)\models\varphi$, which concludes the proof.
\end{proof}

\begin{theorem}\label{thm:main}
Assume \eqref{R:ExClosed}.
Let $(C,u)$ be an equicharacteristic $\mathbb{Z}$-valued field
with uniformizer $\pi$
such that $\mathcal{O}_u$ is excellent,
and let $(K,v)$ be a henselian
extension of $(C,u)$
for which $\pi$ is a uniformizer and $Kv/Cu$ is separable.
Then the universal/existential $\mathcal{L}_{\rm val}(\varpi,C)$-theory of $(K,v,\pi,C)$ is entailed by
\begin{enumerate}[(i)]

    \item the quantifier-free diagram of 
    the $\mathcal{L}_{\rm val}(\varpi)$-structure $(C,u,\pi)$,
     \item the $\mathcal{L}_{\rm val}$-axioms for equicharacteristic henselian valued fields,
    
    \item the $\mathcal{L}_{\rm val}(\varpi)$-axiom expressing that $\pi$ has smallest positive value, and
   \item $\mathcal{L}_{\rm val}(C)$-axioms expressing that the residue field models the universal/existential $\mathcal{L}_{\rm ring}(Cu)$-theory of $Kv$.
\end{enumerate} 
In particular, 
if $C$ is countable and we fix a surjection $\rho \colon \mathbb{N}\rightarrow C$,
the existential $\mathcal{L}_{\rm val}(\varpi,C)$-theory of $(K,v,\pi,C)$
is decidable relative to the existential $\mathcal{L}_{\rm ring}(Cu)$-theory of $(Kv,Cu)$
and the quantifier-free diagram of the $\mathcal{L}_{\rm val}(\varpi)$-structure $(C,u,\pi)$.
\end{theorem}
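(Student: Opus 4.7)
The plan is to combine hypothesis \eqref{R:ExClosed} with Proposition~\ref{prop:main} to show that every model $(L,w,\pi,C)$ of the axioms (i)--(iv) satisfies ${\rm Th}_\exists(K,v,\pi,C)={\rm Th}_\exists(L,w,\pi,C)$; this equality is exactly the claimed entailment of the full universal/existential $\mathcal{L}_{\rm val}(\varpi,C)$-theory of $(K,v,\pi,C)$ (the $\subseteq$ direction giving the existential part of the conclusion, the $\supseteq$ direction giving the universal part). I read the clause ``universal/existential'' in (iv) as including \emph{both} the universal and the existential $\mathcal{L}_{\rm ring}(Cu)$-sentences true in $Kv$, so that (iv) gives both ${\rm Th}_\exists(Kv,Cu)\subseteq{\rm Th}_\exists(Lw,Cu)$ and its reverse inclusion.

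Fixing a model $(L,w,\pi,C)$ of (i)--(iv), axiom (i) makes $(C,u,\pi)$ a substructure of $(L,w,\pi)$ (so in particular $w|_C=u$ and $Cu$ embeds in $Lw$), while (ii) and (iii) yield $(L,w,\pi)\models T$ with uniformizer $\pi$. The only nontrivial hypothesis of Proposition~\ref{prop:main} not immediately in place is separability of $Lw/Cu$, and this is the main obstacle. It is extracted from the universal half of (iv) as follows: in characteristic zero there is nothing to check, while in characteristic $p>0$ one fixes a $p$-basis of $Cu$, and for each finite subtuple $c_1,\dots,c_r$ one observes that linear independence of the monomials $c_1^{e_1}\cdots c_r^{e_r}$ (with $0\le e_j<p$) over the $p$-th powers of the ambient field is a universal $\mathcal{L}_{\rm ring}(Cu)$-sentence
\[
\forall a_1,\dots,a_{p^r}\;\bigl(\sum\nolimits_k a_k^p\,c_1^{e_1(k)}\cdots c_r^{e_r(k)}=0\;\to\;\bigwedge\nolimits_k a_k=0\bigr).
\]
All such sentences hold in $Kv$ by separability of $Kv/Cu$, so by the universal half of (iv) they transfer to $Lw$, giving $Lw/Cu$ separable.

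Now Proposition~\ref{prop:main} applies to the pair $((K,v),(L,w))$, and the existential half of (iv) supplies ${\rm Th}_\exists(Kv,Cu)\subseteq{\rm Th}_\exists(Lw,Cu)$, yielding ${\rm Th}_\exists(K,v,\pi,C)\subseteq{\rm Th}_\exists(L,w,\pi,C)$. Swapping $(K,v)$ and $(L,w)$ in the proposition (the remaining hypotheses are symmetric) and feeding in the universal half of (iv) in its equivalent form ${\rm Th}_\exists(Lw,Cu)\subseteq{\rm Th}_\exists(Kv,Cu)$ yields the reverse inclusion. For the ``in particular'' clause: once $\rho$ is fixed, the axioms (i)--(iv) form a recursively enumerable set relative to the two oracles---(ii) admits a recursive axiomatisation of henselianity (cf.\ the remark after Definition~\ref{def:T}), and both halves of (iv) are decidable relative to the oracle for ${\rm Th}_\exists(Kv,Cu)$, since a universal sentence holds in $Kv$ iff its existential negation does not. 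The entailment just established is exactly completeness of this axiom system for existential $\mathcal{L}_{\rm val}(\varpi,C)$-sentences, so an enumeration of proofs decides ${\rm Th}_\exists(K,v,\pi,C)$ relative to the oracles.
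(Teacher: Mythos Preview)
Your proof is correct and follows essentially the same approach as the paper: reduce to Proposition~\ref{prop:main} by checking that any model of (i)--(iv) satisfies its hypotheses, extract separability of $Lw/Cu$ from the universal half of (iv) via $p$-independence of finite tuples from $Cu$, and then apply the proposition in both directions to obtain the equality of existential theories; the decidability clause is handled identically via the completeness theorem. The only cosmetic difference is that you spell out the two applications of Proposition~\ref{prop:main} separately, whereas the paper compresses this into a single line.
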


\begin{proof}
Note that $(K,v,\pi)$ is a model of 
the theory $T$ from Definition \ref{def:T}.
Every model of $(i)$-$(iii)$
can be viewed as a henselian valued field
extending $(C,u)$ with uniformizer $\pi$,
in particular as a model of $T$.
Let $(L,w,\pi,C)$ be a model of $(i)$-$(iv)$.
Since $Kv/Cu$ is separable,
the universal $\mathcal{L}_{\rm ring}(Cu)$-theory of $Kv$ contains for every finite $p$-independent tuple $\underline x$ from $Cu$ the statement that $\underline x$ remains $p$-independent over $Kv$, so since $(L,w)$ satisfies $(iv)$, also $Lw/Cu$ is separable.
So since $(iv)$ for $(L,w)$ also
implies that 
${\rm Th}_\exists(Lw,Cu)={\rm Th}_\exists(Kv,Cu)$,
Proposition \ref{prop:main}
gives that 
${\rm Th}_\exists(L,w,\pi,C)={\rm Th}_\exists(K,v,\pi,C)$.
This shows that every universal or existential sentence true in $(K,v,\pi,C)$ is entailed by $(i)$-$(iv)$.

The list of axioms $(ii)$ is recursive, and $(iii)$ is a single axiom.
Therefore, 
with an oracle for 
$(i)$ and an oracle for $(iv)$,
the sentences deducible from $(i)$-$(iv)$ can be recursively enumerated.
Since the negation of an existential sentence is universal,
and every existential or universal sentence true in $(K,v,\pi,C)$
is entailed by $(i)$-$(iv)$, 
the completeness theorem then gives a decision procedure for the existential theory of $(K,v,\pi,C)$ modulo these two oracles.
\end{proof}

\begin{remark}\label{rem:relatively-decidable}
The notion of relative decidability used here is the obvious modification of
Remark~\ref{rem:relatively-decidable_0}:
we define an injection of the set of $\mathcal{L}_{\rm val}(\varpi,C)$-sentences into $\mathbb{N}$
by a standard Gödel coding using the map
$C\rightarrow\mathbb{N}$, $c\mapsto\min\rho^{-1}(c)$.
Similarly, we define an injection 
of the set of $\mathcal{L}_{\rm ring}(Cu)$-sentences into $\mathbb{N}$
using in addition the map $Cu\rightarrow\mathbb{N}$,
$d\mapsto\min\rho^{-1}({\rm res}_u^{-1}(d))$.
Relative decidability then means that there exists a Turing machine that takes as input the code
of an existential $\mathcal{L}_{\rm val}(\varpi,C)$-sentence $\varphi$
and outputs yes or no according to whether
$(K,v,\pi,C)\models\varphi$,
and uses an oracle 
for the existential $\mathcal{L}_{\rm ring}(Cu)$-theory of $(Kv,Cu)$
and another oracle for the quantifier-free $\mathcal{L}_{\rm val}(\varpi,C)$-theory
of $(C,u,\pi,C)$.
\end{remark}

\begin{proof}[Proof of Theorem \ref{thm:intro_main}]\label{proof_of_15}
Let $F$ be the prime subfield of $K$.
Since $v(\pi)\neq0$ and $(K,v)$ is equicharacteristic,
$\pi$ is transcendental over $F$.
Since $v(\pi)>0$ and $v$ restricted to $F$ is trivial,
the restriction of $v$ to $F(\pi)$ is the $\pi$-adic valuation $v_{\pi}$.
Let $(C,u)=(F(\pi),v_{\pi})$.
Then $(K,v)$ is an extension of $(C,u)$.
Since $Cu=F$ is perfect, $Kv/Cu$ is separable.
The valuation ring $\mathcal{O}_{u}$ is excellent since $F(\!(\pi)\!)/F(\pi)$ is separable
(cf.~the proof of Lemma \ref{lem:regsmooth})

Let $(L,w,\pi')$ be a model of \ref{thm:intro_main}(i)--(iii).
Then $(L,w,\pi')$ is a model of \ref{thm:main}(ii)--(iii).
In particular, $(L,w,\pi')$ is equicharacteristic, and by \ref{thm:intro_main}(iii), 
$Lw$ has the same characteristic as $Kv$.
Thus, without loss of generality, $L$ is an extension of $F$.
By the same argument as above for $(K,v)$,
$(L,w)$ is an extension of $(F(\pi'),v_{\pi'})$.
The latter is isomorphic to $(F(\pi),v_{\pi})$,
thus $(L,w,\pi')$ admits a unique expansion to an $\mathcal{L}_{\mathrm{val}}(\varpi,C)$-structure
$\mathbb{L}$
that models \ref{thm:main}(i) for $\mathbb{K}:=(K,v,\pi,C)$.
Since $Lw$ models the universal/existential theory of $Kv$,
and $Cu=F$ is a prime field,
$\mathbb{L}$ models \ref{thm:main}(iv) for $\mathbb{K}$.

By Theorem~\ref{thm:main},
$\mathbb{L}$ models the universal/existential theory of $\mathbb{K}$.
In particular, $(L,w,\pi')$ models the universal/existential theory of $(K,v,\pi)$,
which proves the first part of the theorem.
The final sentence of the theorem follows, just as the final sentence of Theorem~\ref{thm:main}.
\end{proof}

\begin{proof}[Proof of Corollary \ref{cor:intro_Fpt}]
This follow from Theorem \ref{thm:main} with $(C,u)=(\mathbb{F}_q(t),v_t)$
and $(K,v)=(\mathbb{F}_q(\!(t)\!),v_t)$, for which we verify the prerequisites.
The existential $\mathcal{L}_{\rm ring}(\mathbb{F}_q)$-theory of $\mathbb{F}_q$ is decidable.
The same holds for the quantifier-free diagram of $(\mathbb{F}_q(t),v_t,t)$, since elements of $\mathbb{F}_q(t)$ can be explicitly represented as quotients of polynomials in such a way as to allow concrete computation of sums, products and the valuation under $v_t$.
More formally, the ring $\mathbb{F}_q(t)$ has an \emph{explicit representation} in the sense of \cite[§2]{EffectiveProcedures} by \cite[Theorem 3.4]{EffectiveProcedures}, and this representation in particular fixes a surjection $\rho \colon \mathbb{N} \to \mathbb{F}_q(t)$ with respect to which the quantifier-free diagram of $(\mathbb{F}_q(t), t)$ is decidable in the sense of Remark \ref{rem:relatively-decidable}.
Since elements of $\mathbb{F}_q(t)$ are represented by quotients $f/g$ with $f, g \in \mathbb{F}_q[t]$ with $g \neq 0$ in the proof of \cite[Theorem 3.4]{EffectiveProcedures}, and one can determine $v_t(f)$ and $v_t(g)$ for explicitly given $f$ and $g$, it is decidable whether a given element of $\mathbb{F}_q(t)$ lies in the valuation ring of $v_t$.
\end{proof}

\begin{corollary}\label{cor:ex_AKE}
Assume \eqref{R:ExClosed}.
Let $(C,u)\subseteq(K,v)$ be an extension of equicharacteristic henselian valued fields.
Suppose that $uC=\mathbb{Z}$ and that $\mathcal{O}_u$ is excellent.
If $uC\prec_\exists vK$ and $Cu\prec_\exists Kv$, then
$(C,u)\prec_\exists(K,v)$.
\end{corollary}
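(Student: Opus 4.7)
The plan is to deduce this from Theorem~\ref{thm:main}. Fix a uniformizer $\pi$ of $(C,u)$, which exists because $uC \cong \mathbb{Z}$, and consider the $\mathcal{L}_{\rm val}(\varpi, C)$-structures $(C,u,\pi,C)$ and $(K,v,\pi,C)$. It suffices to establish the containment ${\rm Th}_\exists(K,v,\pi,C) \subseteq {\rm Th}_\exists(C,u,\pi,C)$: since $\pi = c_\pi \in C$ already, any existential $\mathcal{L}_{\rm val}(C)$-sentence corresponds to an existential $\mathcal{L}_{\rm val}(\varpi, C)$-sentence (and vice versa), so this containment immediately yields $(C,u) \prec_\exists (K,v)$ in $\mathcal{L}_{\rm val}$.

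To invoke Theorem~\ref{thm:main} on $(C,u) \subseteq (K,v)$ with uniformizer $\pi$, I need to verify its two prerequisites on $(K, v)$. First, that $\pi$ is also a uniformizer of $v$: the statement ``$u(\pi)$ is the smallest positive element'' is a universal formula in the language of ordered abelian groups with parameter $u(\pi) \in uC$, so by the hypothesis $uC \prec_\exists vK$ (equivalently, upward preservation of universal formulas with parameters from $uC$) the same holds of $v(\pi)$ in $vK$. Second, that $Kv/Cu$ is separable: being $p$-independent over a field is a universal condition on a tuple in the ring language, because its negation, $p$-dependence, asserts the existence of a nontrivial vanishing linear combination of monomials (of multi-degree less than $p$) with $p$-th power coefficients; hence by $Cu \prec_\exists Kv$ every $p$-independent tuple from $Cu$ remains $p$-independent in $Kv$, yielding separability.

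Theorem~\ref{thm:main} now applies and tells me that ${\rm Th}_{\forall/\exists}(K,v,\pi,C)$ is entailed by its axioms (i)--(iv). It remains to check that $(C,u,\pi,C)$ is itself a model of those axioms. Axiom (i) is its quantifier-free diagram by definition; (ii) and (iii) hold since $(C,u)$ is an equicharacteristic henselian valued field with uniformizer $\pi$; and (iv) reduces to the assertion that the residue field $Cu$ of $(C,u)$ satisfies the universal/existential $\mathcal{L}_{\rm ring}(Cu)$-theory of $Kv$, which is precisely the content of $Cu \prec_\exists Kv$ (existential part transferred down by existential closedness, universal part automatic because $Cu$ is a substructure of $Kv$). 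Therefore every existential $\mathcal{L}_{\rm val}(\varpi, C)$-sentence true in $(K,v,\pi,C)$ also holds in $(C,u,\pi,C)$, completing the proof.

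The main obstacle, if any, is simply matching the two hypotheses ($uC \prec_\exists vK$ and $Cu \prec_\exists Kv$) against the two prerequisites of Theorem~\ref{thm:main} (uniformizer of $v$ and separability of the residue extension) and against axiom (iv). All the real difficulty, in particular the dependence on \eqref{R:ExClosed}, is absorbed into Theorem~\ref{thm:main}, so this corollary is essentially a packaging step.
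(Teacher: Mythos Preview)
Your proof is correct and follows essentially the same route as the paper: verify that a uniformizer of $u$ is a uniformizer of $v$ via $uC\prec_\exists vK$, that $Kv/Cu$ is separable via $Cu\prec_\exists Kv$, and then invoke Theorem~\ref{thm:main}. The paper phrases the last step more tersely (simply noting that the axioms (i)--(iv) coincide for $(C,u,\pi,C)$ and $(K,v,\pi,C)$), whereas you spell out explicitly that $(C,u,\pi,C)$ satisfies the axiomatization of ${\rm Th}_{\forall/\exists}(K,v,\pi,C)$; these are the same argument.
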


\begin{proof}
The assumption $\mathbb{Z}=uC\prec_\exists vK$ implies
that a uniformizer of $u$ is also a uniformizer of $v$.
The assumption $Cu\prec_\exists Kv$ implies that $Kv/Cu$ is separable
(see again \cite[Corollary 3.1.3]{Ershov_book}) and that $Cu$ and $Kv$ have the same existential $\mathcal{L}_{\rm ring}(Cu)$-theory.
Theorem \ref{thm:main} then gives that
$(C,u)$ and $(K,v)$ have the same existential $\mathcal{L}_{\rm val}(C)$-theory.
\end{proof}

\begin{remark}
The assumption that $\mathcal{O}_u$ is excellent is necessary in all these statements.
For example, in Corollary~\ref{cor:ex_AKE}, if $(C,u)$ is existentially closed in its completion $(K,v)=(\hat{C},\hat{u})$, then in particular $\hat{C}/C$ is separable. 
\end{remark}

As a consequence we also obtain the following variant of \cite[Theorem 7.1]{AF} and its corollaries:

\begin{corollary}\label{cor:notdiscrete}
Assume \eqref{R:ExClosed}.
Let $(K,v)$ and $(L,w)$ be equicharacteristic nontrivially valued henselian fields with the common trivially valued  subfield $C$.
We identify $C$ with its image under ${\rm res}_v$ and under ${\rm res}_w$ and assume that
$Kv/C$ and $Lw/C$ are separable.
If ${\rm Th}_\exists(Kv,C)\subseteq{\rm Th}_\exists(Lw,C)$,
then ${\rm Th}_\exists(K,v,C)\subseteq{\rm Th}_\exists(L,w,C)$.
\end{corollary}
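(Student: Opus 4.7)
My plan is to derive Corollary~\ref{cor:notdiscrete} from Proposition~\ref{prop:main} by manufacturing a common $\mathbb{Z}$-valued base field inside $(K,v)$ and $(L,w)$, following the structural outline of the proof of Proposition~\ref{prop:main}. Let $\varphi$ be an existential $\mathcal{L}_{\rm val}(C)$-sentence true in $(K,v,C)$; I want to show $(L,w,C) \models \varphi$. After passing to sufficiently saturated elementary extensions of both structures, the hypothesis ${\rm Th}_\exists(Kv,C) \subseteq {\rm Th}_\exists(Lw,C)$ combined with saturation of $(L,w)$ yields a $C$-embedding $f\colon Kv \to Lw$, as in the proof of Proposition~\ref{prop:main}.

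Next, since $v$ and $w$ are nontrivial, pick $\pi \in K$ with $v(\pi) > 0$ and, by saturation of $(L,w)$, pick $\pi' \in L$ with $w(\pi') > 0$. As $C$ is trivially valued, both $\pi$ and $\pi'$ are transcendental over $C$ (any algebraic extension of a trivially valued field is trivially valued), and $v|_{C(\pi)}$ and $w|_{C(\pi')}$ are the respective $\pi$-adic and $\pi'$-adic valuations. The $C$-isomorphism $C(\pi) \to C(\pi')$ sending $\pi \mapsto \pi'$ is thus a valued-field isomorphism, and identifying $\pi$ with $\pi'$ yields a common $\mathbb{Z}$-valued subfield $(C_0, u_0) = (C(\pi), v_\pi)$ of both $(K,v)$ and $(L,w)$, with uniformizer $\pi$, over which I would try to invoke Proposition~\ref{prop:main}.

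The obstacles are that $\pi$ is generally not a uniformizer of $v$ on $K$ (when $vK$ strictly contains $\mathbb{Z} v(\pi)$) and that $\mathcal{O}_{u_0}$ may fail to be excellent for imperfect $C$. My intention is to handle the first issue by the coarsening construction already used in the proof of Proposition~\ref{prop:main}: pass to the coarsening $v^+$ of $v$ corresponding to the largest convex subgroup of $vK$ not meeting $\mathbb{Z} v(\pi)$, making $\pi$ a uniformizer of $v^+$; Corollary~\ref{cor:large} then bridges the existential theory of $(K,v)$ with that of the $\mathbb{Z}$-valued henselian field $(Kv^+, \bar v)$, which is covered by Proposition~\ref{prop:main}. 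The excellence obstacle should be dissolved by restricting parameters to a perfect subfield of $C$ and reintroducing the $C$-parameters via the separability of $Kv/C$, as in the proof of Theorem~\ref{thm:intro_main}. I expect the main difficulty to be the dense rank-1 case, where no proper nontrivial coarsening of $v$ exists on $K$ and no elementary extension of it exists either, so that reducing to the $\mathbb{Z}$-valued situation requires a more delicate saturation argument working with only the finitely many terms that occur in $\varphi$.
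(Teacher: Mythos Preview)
Your plan has a genuine gap in the densely ordered case, which you correctly flag as the main difficulty but do not resolve. If $vK$ is divisible (e.g.\ $vK=\mathbb{Q}$), then every quotient $vK/H$ by a convex subgroup is again divisible, so no coarsening of $v$ has a uniformizer; passing to a saturated elementary extension does not help, since divisibility of the value group is first-order. Thus the convex subgroup you describe---the largest $H$ not meeting $\mathbb{Z}v(\pi)$---does \emph{not} make $\pi$ a uniformizer of $v^+$ in general. Your suggestion of ``working with only the finitely many terms that occur in $\varphi$'' is too vague to close this: Proposition~\ref{prop:main} genuinely requires a uniformizer of the ambient valuation, and none is available in the structures you have set up.

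The paper sidesteps the problem by not looking for a uniformizer inside $K$ or $L$ at all. Instead it \emph{adjoins} a fresh transcendental $\pi$: it constructs a henselian extension $(K',v')$ of $(K,v)$ with $K'v'=Kv$ in which $\pi$ is a uniformizer (via \cite[Lemma~2.1]{AF2} and composition with $v_t$), and on the $L$-side it uses the auxiliary field $(Lw(\pi)^h,v_\pi)$ rather than $(L,w)$. Proposition~\ref{prop:main} then applies directly over the base $(C(\pi),v_\pi)$, yielding ${\rm Th}_\exists(K,v,C)\subseteq{\rm Th}_\exists(K',v',C)\subseteq{\rm Th}_\exists(Lw(\pi)^h,v_\pi,C)$. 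The link back to $(L,w)$ is made by Lemma~\ref{lem:section}: a partial section $Lw\to L^*$ over $C$ into an elementary extension $(L^*,w^*)\succ(L,w)$ extends to a valued-field embedding of $(Lw(\pi)^h,v_\pi)$ by sending $\pi$ to any element of positive $w^*$-value. Incidentally, your excellence worry is misplaced: $\mathcal{O}_{v_\pi}\subseteq C(\pi)$ is always excellent, being a localization of the finitely generated $C$-algebra $C[\pi]$, so restricting to a perfect subfield of $C$ is unnecessary (and would in any case discard the very $C$-parameters the statement is about).
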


\begin{proof}
Let $(K',v')$ be a henselian extension of $(K,v)$
that has a uniformizer $\pi$ (say a fresh indeterminate)
and satisfies
$Kv=K'v'$ -- 
for example, let $(K',\tilde{v})$ be a henselian extension of $(K,v)$ with $K'\tilde{v}=Kv(\!(t)\!)$ as in \cite[Lemma 2.1]{AF2}, 
and let $v'$ be the composed valuation $v_t\circ \tilde{v}$.
Then both $(K',v',\pi)$ and $(Lw(\pi)^{h},v_{\pi},\pi)$ are extensions of the
equicharacteristic $\mathbb{Z}$-valued field 
$(C(\pi),v_{\pi})$.
Moreover, 
the extension
$K'v'/C(\pi)v_{\pi}$
is equal to $Kv/C$,
and 
$(Lw(\pi)^{h})v_{\pi}/C(\pi)v_{\pi}$
is equal to $Lw/C$,
so we may apply Proposition~\ref{prop:main} to obtain
${\rm Th}_\exists(K',v',\pi,C(\pi)) \subseteq {\rm Th}_\exists(Lw(\pi)^h,v_{\pi},\pi,C(\pi))$.
Taking reducts yields ${\rm Th}_\exists(K,v,C) \subseteq {\rm Th}_\exists(K',v',C) \subseteq {\rm Th}_\exists(Lw(\pi)^h,v_{\pi},C)$.

By Lemma \ref{lem:section} there exists
an elementary extension $(L,w)\prec(L^*,w^*)$
with a partial section $\zeta \colon Lw\rightarrow L^*$ of ${\rm res}_{w^*}$ over $C$, which then extends
to an embedding $(Lw(\pi)^h,v_{\pi})\rightarrow(L^*,w^*)$ over $C$
by sending $\pi$ to any nonzero element
of positive value.
Hence we obtain the inclusion ${\rm Th}_\exists(Lw(\pi)^h,v_\pi,C) \subseteq {\rm Th}_\exists(L^*,w^*,C) = {\rm Th}_\exists(L,w,C)$.
\end{proof}

\begin{remark}\label{rem:R4}
We observe that the assumption
\eqref{R:ExClosed} 
is necessary in Corollary \ref{cor:notdiscrete}
(and hence in Proposition~\ref{prop:main} and also, less immediately, in Theorem \ref{thm:main}):
If $C$ is large and $K/C$ an extension 
with a valuation $v$ on $K/C$ with $Kv=C$,
we apply Corollary \ref{cor:notdiscrete} with
$(L,w)=(C(\!(t)\!),v_t)$;
since $Kv=C=Lw$, we conclude that ${\rm Th}_\exists(K,v,C)={\rm Th}_\exists(C(\!(t)\!),v_t,C)$,
so $C\prec_\exists C(\!(t)\!)$
implies that $C\prec_\exists K$,
hence \eqref{R:ExClosed} must hold.
\end{remark}

\begin{remark}
As noted in Remark \ref{rem:Rn},
the statement of \eqref{R:ExClosed} is known to hold at least for certain base fields $K$,
and inspection of our proofs shows that 
the assumption that \eqref{R:ExClosed} holds can be weakened 
to require only that it holds when restricted to specific base fields:
In Corollary \ref{cor:large} 
it suffices to assume that \eqref{R:ExClosed} holds when restricted to 
elementary extensions of $Kv$.
In Proposition \ref{prop:main}
it therefore suffices to assume that \eqref{R:ExClosed} holds when restricted to 
residue fields of proper coarsenings of elementary extensions of $(K,v)$ and $(L,w)$.
In particular, both here and similarly
in Theorem \ref{thm:main}, Corollary \ref{cor:ex_AKE} and Corollary \ref{cor:notdiscrete}
it suffices to assume that \eqref{R:ExClosed} holds when restricted to
extensions of $C$.
\end{remark}

\section*{Acknowledgements}
\noindent
A.~F.~was funded by the Deutsche Forschungsgemeinschaft (DFG) - 404427454.
S.~A.\ was supported by GeoMod AAPG2019 (ANR-DFG).
S.~A.\ and A.~F.\ were supported by the Institut Henri Poincaré.

\def\bibfont{\footnotesize}
\bibliographystyle{plain}

\end{document}